\newcommand{\omitthis}[1]{}
\newcommand{\mytitle}{A Categorical Semantics of Fuzzy Concepts in Conceptual Spaces}
\title{\mytitle} 
\author{Sean Tull\thanks{ 
We thank Bob Coecke, Steve Clark, Vincent Wang, Dimitri Kartsaklis and Sara Sabrina Zemljic for interesting discussions, and anonymous reviewers for ACT 2021 for helpful suggestions.}
\institute{Cambridge Quantum Computing}
\email{sean.tull@cambridgequantum.com} 
}
\newcommand{\Gardenfors}{G\"ardenfors}
\newcommand{\dom}{\mathsf{dom}}
\newcommand{\PL}{Pr\'ekopa-Leindler}
\newcommand{\ProbC}{\cat{Prob}} 
\newcommand{\Rplus}{\mathbb{R}_{\geq 0}}
\newcommand{\plusp}{+_p}
\newcommand{\kto}{\to} 
\newcommand{\Bechberger}{Bechberger{}}
\newcommand{\Kuhnberger}{Kuhnberger{}}
\theoremstyle{definition}
\newtheorem{Def}{Definition}
\newtheorem{definition}[Def]{Definition}
\newtheorem{theorem}[Def]{Theorem}
\newtheorem{lemma}[Def]{Lemma}
\newtheorem{remark}[Def]{Remark}
\newtheorem{criterion}[Def]{Criterion}
\newtheorem{examples}[Def]{Examples}
\newcommand{\pset}{\mathbb{P}}
\newcommand{\cat}[1]{\ensuremath{\mathbf{#1}}}
\newcommand{\catC}{\cat{C}}
\newcommand{\id}[1]{\ensuremath{\mathrm{id}_{#1}}}
\newcommand{\ConvRel}{\cat{ConvRel}}
\newcommand{\LogCon}{\cat{LCon}}
\newcommand{\hilbH}{\mathcal{H}} 
\newcommand{\discardflip}[1]{\ensuremath{\tinygroundflipnew_{#1}}}
\tikzstyle{whitedot}=[circle, draw=black, fill=white, inner sep=.4ex]
\tikzstyle{none}=[inner sep=0mm]
\tikzstyle{cdot}=[circle, draw=black, fill=black!25, inner sep=.4ex] 
\tikzstyle{bigdot}=[dot, inner sep=0pt]
\tikzstyle{whitedot}=[circle, draw=black, fill=white, inner sep=.4ex]
\tikzstyle{greydot}=[circle, draw=black, fill=black!25, inner sep=.4ex] 
\tikzstyle{blackdot}=[circle, draw=black, fill=black, inner sep=.4ex]
\tikzset{arrow/.style={decoration={
    markings,
    mark=at position #1 with \arrow{>[length=2pt, width=3pt]}},
    postaction=decorate},
    reverse arrow/.style={decoration={
    markings,
    mark=at position #1 with {{\arrow{<[length=2pt, width=3pt]}}}},
    postaction=decorate}
}
\newcommand\relto[1][]{\mathbin{\smash{
\begin{tikzpicture}[baseline={([yshift=-1pt]
current bounding box.south)}]
    \node (A) at (0,0) [inner xsep=0pt, inner ysep=1pt, minimum width=0.15cm] {\ensuremath{\scriptstyle #1}};
    \draw [->, line width=0.4pt, line cap=round]
        ([xshift=-2.5pt] A.south west)
        to ([xshift=3pt] A.south east);
    \draw [line width=.4pt] ([xshift=-1pt,yshift=-2pt]A.south) to ([xshift=1pt,yshift=2pt]A.south);
\end{tikzpicture}}}}
\newenvironment{pic}[1][] {\begin{aligned}\begin{tikzpicture}[scale=2.0, font=\tiny,#1]}{\end{tikzpicture}\end{aligned}} 
\newif\ifvflip\pgfkeys{/tikz/vflip/.is if=vflip}
\newif\ifhflip\pgfkeys{/tikz/hflip/.is if=hflip}
\newif\ifhvflip\pgfkeys{/tikz/hvflip/.is if=hvflip}
\newenvironment{picc}[1][]
{\begin{aligned}\begin{tikzpicture}[font=\tiny,#1]}
{\end{tikzpicture}\end{aligned}}
\newlength\minimummorphismwidth
\newlength\stateheight
\newlength\minimumstatewidth
\newlength\connectheight
\tikzset{colour/.initial=white}
\tikzstyle{pure}=[line width=.7pt]
\tikzstyle{none}=[inner sep=0mm]
\tikzstyle{every loop}=[]
\tikzstyle{mark coordinate}=[inner sep=0pt,outer sep=0pt,minimum size=3pt,fill=black,circle]
\tikzset{arrow/.style={decoration={
    markings,
    mark=at position #1 with \arrow{>[length=2pt, width=3pt]}},
    postaction=decorate},
    reverse arrow/.style={decoration={
    markings,
    mark=at position #1 with {{\arrow{<[length=2pt, width=3pt]}}}},
    postaction=decorate}
}
\tikzstyle{upground}=[circuit ee IEC,thick,ground,rotate=90,scale=1.5]
\tikzstyle{upgroundwhite}=[circuit ee IEC,thick,ground,rotate=90,scale=1.5, fill=white]
\tikzstyle{downground}=[circuit ee IEC,thick,ground,rotate=-90,scale=1.5]
\tikzstyle{downgroundnorm}=[circuit ee IEC,thick,ground,rotate=-90,scale=1.5, fill=white]
\newcommand{\mapminh}{5mm} 
\newcommand{\stateminh}{5mm}
\newcommand{\maplw}{0.7pt} 
\newcommand{\stateshift}{-0.2pt}
\newcommand{\effectshift}{-0.2pt}
\tikzstyle{box}=[map]
\tikzstyle{medium box}=[medium map]
\tikzstyle{dot}=[inner sep=0mm,minimum width=2mm,minimum height=2mm,draw,shape=circle]  
\tikzstyle{black dot}=[dot,fill=black]
\tikzstyle{white dot}=[dot,fill=white,,text depth=-0.2mm]
\tikzstyle{grey dot}=[dot,fill=black!25] 
\tikzstyle{corner1}=[box,fill=white, font=\footnotesize] %
\tikzstyle{corner2}=[dot,fill=white, font=\footnotesize] %
\tikzstyle{corner3}=[dot,fill=black!25, font=\footnotesize] %
\tikzstyle{corner4}=[dot,fill=black, font=\footnotesize] %
\tikzstyle{scalar}=[circle,draw,inner sep=2pt, line width=\maplw] 
\tikzset{stateshape/.style={append after command={
   \pgfextra
        \draw[sharp corners, fill=white, line width = \maplw]%
    (\tikzlastnode.west)%
    [rounded corners=0pt] |- (\tikzlastnode.north)%
    [rounded corners=0pt] -| (\tikzlastnode.east)%
    [rounded corners=5pt] |- (\tikzlastnode.south)%
    [rounded corners=5pt] -| (\tikzlastnode.west);
   \endpgfextra}}}
\tikzset{effectshape/.style={append after command={
   \pgfextra
        \draw[sharp corners, fill=white, line width = \maplw]%
    (\tikzlastnode.west)%
    [rounded corners=0pt] |- (\tikzlastnode.south)%
    [rounded corners=0pt] -| (\tikzlastnode.east)%
    [rounded corners=5pt] |- (\tikzlastnode.north)%
    [rounded corners=5pt] -| (\tikzlastnode.west);
   \endpgfextra}}}
 \tikzstyle{map}=[draw,shape=rectangle, inner sep=2pt,minimum height=\mapminh, minimum width=5mm,fill=white]
\tikzstyle{point}=[stateshape,inner sep=2pt, minimum width=6mm, minimum height=\stateminh, yshift=\stateshift]
\tikzstyle{copoint}=[effectshape,inner sep=.2pt, minimum width=6mm, minimum height=\stateminh, yshift=-\effectshift]
\tikzstyle{wide point}=[point, minimum width=12mm]
\tikzstyle{wide copoint}=[copoint, minimum width=12mm]
\tikzstyle{decomp}=[fill=white,draw,shape=isosceles triangle,shape border rotate=-90,isosceles triangle stretches=true,inner sep=0pt,minimum width=0.75cm,minimum height=4mm,yshift=-0.0mm]
\tikzstyle{decompwide}=[fill=white,draw,shape=isosceles triangle,shape border rotate=-90,isosceles triangle stretches=true,inner sep=0pt,minimum width=1.5cm,minimum height=4mm,yshift=-0.0mm]
\tikzstyle{decompflip}=[fill=white,draw,shape=isosceles triangle,shape border rotate=90,isosceles triangle stretches=true,inner sep=0pt,minimum width=0.75cm,minimum height=4mm,yshift=-0.0mm]
\tikzstyle{decompwideflip}=[fill=white,draw,shape=isosceles triangle,shape border rotate=90,isosceles triangle stretches=true,inner sep=0pt,minimum width=1.5cm,minimum height=4mm,yshift=-0.0mm]
\tikzstyle{medium map} = [map, minimum width = 12mm] 
\tikzstyle{semilarge map} = [map, minimum width = 15mm] 
\tikzstyle{large map} = [map, minimum width = 18mm] 
\tikzstyle{kpoint} =[point]
\tikzstyle{kpointadj} =[copoint]
\tikzstyle{kpointconj}=[dagpointconj] 
\newcommand{\boxshape}[3]{%
\pgfdeclareshape{#1}{
\inheritsavedanchors[from=rectangle] 
\inheritanchorborder[from=rectangle]
\inheritanchor[from=rectangle]{center}
\inheritanchor[from=rectangle]{north}
\inheritanchor[from=rectangle]{south}
\inheritanchor[from=rectangle]{west}
\inheritanchor[from=rectangle]{east}
\backgroundpath{
\southwest \pgf@xa=\pgf@x \pgf@ya=\pgf@y
\northeast \pgf@xb=\pgf@x \pgf@yb=\pgf@y

\@tempdima=#2
\@tempdimb=#3

\pgfpathmoveto{\pgfpoint{\pgf@xa - 5pt + \@tempdima}{\pgf@ya}}
\pgfpathlineto{\pgfpoint{\pgf@xa - 5pt - \@tempdima}{\pgf@yb}}
\pgfpathlineto{\pgfpoint{\pgf@xb + 5pt + \@tempdimb}{\pgf@yb}}
\pgfpathlineto{\pgfpoint{\pgf@xb + 5pt - \@tempdimb}{\pgf@ya}}
\pgfpathlineto{\pgfpoint{\pgf@xa - 5pt + \@tempdima}{\pgf@ya}}
\pgfpathclose
}
}}
\tikzstyle{cloud}=[shape=cloud,draw,minimum width=1.5cm,minimum height=1.5cm]
\tikzstyle{dagmap}=[draw,shape=NEbox,inner sep=2pt,minimum height=\mapminh,fill=white, line width = \maplw] %
\tikzstyle{dashedmap}=[draw,dashed,shape=NEbox,inner sep=2pt,minimum height=\mapminh,fill=white, line width = \maplw]
\tikzstyle{mapdag}=[draw,shape=SEbox,inner sep=2pt,minimum height=\mapminh,fill=white, line width = \maplw]
\tikzstyle{mapadj}=[draw,shape=SEbox,inner sep=2pt,minimum height=\mapminh,fill=white, line width = \maplw]
\tikzstyle{maptrans}=[draw,shape=SWbox,inner sep=2pt,minimum height=\mapminh,fill=white, line width = \maplw]
\tikzstyle{mapconj}=[draw,shape=NWbox,inner sep=2pt,minimum height=\mapminh,fill=white, line width = \maplw]
\tikzstyle{medium dagmap}=[draw,shape=NEbox,inner sep=2pt,minimum height=\mapminh,fill=white,minimum width=7mm, line width = \maplw]
\tikzstyle{semilarge dagmap}=[draw,shape=NEbox,inner sep=2pt,minimum height=\mapminh,fill=white,minimum width=9.5mm, line width = \maplw]
\tikzstyle{large dagmap}=[draw,shape=NEbox,inner sep=2pt,minimum height=\mapminh,fill=white,minimum width=12mm, line width = \maplw]
\pgfmathsetmacro{\pgf@shorten@left}{\pgfkeysvalueof{/tikz/shorten left}}
\pgfmathsetmacro{\pgf@shorten@right}{\pgfkeysvalueof{/tikz/shorten right}}
\pgfmathsetmacro{\pgf@shorten@left}{\pgfkeysvalueof{/tikz/shorten left}}
\pgfmathsetmacro{\pgf@shorten@right}{\pgfkeysvalueof{/tikz/shorten right}}
\tikzstyle{dagpoint common}=[draw,fill=white,inner sep=1pt, line width = \maplw, minimum height = 4mm, yshift=1.2pt] 
\tikzstyle{dagpoint sc}=[shape=cornerpoint,dagpoint common]
\tikzstyle{dagpoint adjoint sc}=[shape=cornercopoint,dagpoint common]
\tikzstyle{dagpoint}=[shape=cornerpoint,shorten left=4pt,dagpoint common]
\tikzstyle{dagpointadj}=[shape=cornercopoint,shorten left=5pt,dagpoint common]
\tikzstyle{dagpointconj}=[shape=cornerpoint,shorten right=5pt,dagpoint common]
\tikzstyle{dagpointtrans}=[shape=cornercopoint,shorten right=5pt,dagpoint common]
\tikzstyle{dagpointsymm}=[shape=cornerpoint,shorten left=5pt,shorten right=5pt,dagpoint common]
\tikzstyle{widedagpoint}=[dagpoint, minimum width=1 cm, inner sep=2pt]
\tikzstyle{widedagpointadj}=[dagpointadj, minimum width=1 cm, inner sep=2pt]
\tikzstyle{every picture}=[baseline=-0.25em,scale=0.5]
\tikzstyle{label}=[font=\footnotesize,text height=1ex, text depth=0.15ex]
\tikzset{
sidetriangle/.style = {regular polygon, regular polygon sides = 3, aspect = 1, shape border rotate = 90, draw, inner sep = 0, minimum width = 1.2cm}
}
\tikzset{
isoc/.style = {shape=isosceles triangle, shape border rotate = 180, isosceles triangle stretches = true, minimum width = 1.2cm, minimum height= 1.5cm, inner sep = 0.3}}
\tikzset{
coarse/.style = {shape = circle, fill = white, draw, inner sep = 0, minimum width =0.125cm}
}
\tikzset{
coarsesymbol/.style = {shape = circle, fill = white, inner sep = -0.7, minimum width = 0.125cm}
}
\tikzstyle{sidetriangle2}=[sidetriangle, minimum width = 2cm, fill=white]
\tikzstyle{sideisocsmall}]=[style=isoc, minimum width = 1cm, minimum height = 0.8cm, draw, fill=white, font=\Large]
\tikzstyle{sideisoc}]=[style=isoc, minimum width = 2cm, draw, fill=white, font=\Large]
\tikzstyle{sideisocmid}]=[style=isoc, minimum width = 2.5cm, draw, fill=white, font=\Large]
\tikzstyle{sideisocmedium}]=[style=isoc, minimum width = 3cm, draw, fill=white, font=\Large]
\newcommand{\tinygroundflipnew}{
\smash{
{\hspace{-3pt}
\ensuremath{
\begin{picc}[yscale=-1.0] 
    \node[upground, xscale=0.8, yscale=-0.7] (1) at (0,0.10) {};
    \draw (0,-0.03) to (0,-0.31);
\end{picc}
}\hspace{-1pt}}}}
\tikzstyle{label}=[font=\footnotesize,text height=1ex, text depth=0.15ex]
\tikzstyle{box}=[map]
\tikzstyle{medium box}=[medium map]
\tikzstyle{dot}=[inner sep=0mm,minimum width=2mm,minimum height=2mm,draw,shape=circle]  
\tikzstyle{black dot}=[dot,fill=black]
\tikzstyle{white dot}=[dot,fill=white,,text depth=-0.2mm]
\tikzstyle{grey dot}=[dot,fill=black!25] 
\tikzstyle{corner1}=[box,fill=white, font=\footnotesize] %
\tikzstyle{corner2}=[dot,fill=white, font=\footnotesize] %
\tikzstyle{corner3}=[dot,fill=black!25, font=\footnotesize] %
\tikzstyle{corner4}=[dot,fill=black, font=\footnotesize] %
\tikzstyle{scalar}=[circle,draw,inner sep=2pt, line width=\maplw] 
\tikzset{stateshape/.style={append after command={
   \pgfextra
        \draw[sharp corners, fill=white, line width = \maplw]%
    (\tikzlastnode.west)%
    [rounded corners=0pt] |- (\tikzlastnode.north)%
    [rounded corners=0pt] -| (\tikzlastnode.east)%
    [rounded corners=5pt] |- (\tikzlastnode.south)%
    [rounded corners=5pt] -| (\tikzlastnode.west);
   \endpgfextra}}}
\tikzset{effectshape/.style={append after command={
   \pgfextra
        \draw[sharp corners, fill=white, line width = \maplw]%
    (\tikzlastnode.west)%
    [rounded corners=0pt] |- (\tikzlastnode.south)%
    [rounded corners=0pt] -| (\tikzlastnode.east)%
    [rounded corners=5pt] |- (\tikzlastnode.north)%
    [rounded corners=5pt] -| (\tikzlastnode.west);
   \endpgfextra}}}
\tikzstyle{point}=[stateshape,inner sep=2pt, minimum width=6mm, minimum height=\stateminh, yshift=\stateshift]
\tikzstyle{copoint}=[effectshape,inner sep=.2pt, minimum width=6mm, minimum height=\stateminh, yshift=-\effectshift]
\tikzstyle{wide point}=[point, minimum width=12mm]
\tikzstyle{wide copoint}=[copoint, minimum width=12mm]
\newcommand{\CCon}{\CC} 
\newcommand{\CC}{\mathsf{Con}} 
\newcommand{\FCon}{\mathsf{FCon}} 
\newcommand{\Gauss}{\cat{Gauss}}
\newcommand{\QCon}{\mathsf{QCon}}
\newcommand{\Tr}{\mathsf{Tr}}
\begin{document}
\maketitle

\begin{abstract}
We define a symmetric monoidal category modelling fuzzy concepts and fuzzy conceptual reasoning within \Gardenfors{}' framework of conceptual (convex) spaces. We propose log-concave functions as models of fuzzy concepts, showing that these are the most general choice which both are well-behaved compositionally and satisfy \Gardenfors{}' criterion of quasi-concavity. We then generalise these to define the category of log-concave probabilistic channels between convex spaces, which allows one to model fuzzy reasoning with noisy inputs, and provides a novel example of a Markov category. 
\end{abstract}

\section{Introduction}

How can we model conceptual reasoning in a way which is formal and yet reflects the fluidity of concept use in human cognition? One answer to this question is given by Peter \Gardenfors{}' framework of \emph{conceptual spaces} \cite{gardenfors1993emergence,gardenfors2004conceptual,gardenfors2014geometry}, in which domains of conceptual reasoning are modelled by mathematical spaces and concepts are described geometrically, typically as \emph{convex} regions of these spaces. 

 The theory of conceptual spaces is defined only semi-formally, giving room for many authors to define their own mathematical formalisations  \cite{aisbett2001general,rickard2007reformulation,warglien2013semantics,lewis2016hierarchical,bechberger2017thorough}. A notable aspect of the framework is that it is \emph{compositional} in the sense that each overall conceptual space is given by composing various simpler \emph{domains} (e.g.~colour, sound, taste). This aspect makes the framework highly suited to formalisation in terms of \emph{monoidal categories}.
 
  Bolt et al.\ \cite{bolt2019interacting} have presented a categorical model of conceptual spaces within the \emph{DisCoCat} framework for natural language semantics \cite{coecke2010mathematical}, using the compact monoidal category $\ConvRel$ of convex relations. Here a conceptual space is modelled as a convex algebra $A$ and the meaning of a word (concept) as a convex subset. The Bolt et al.{} model demonstrates the use of monoidal categories in modelling the composition of conceptual spaces, and the correlations between domains contained within concepts.

  However, like most formalisations of conceptual spaces, the model of \cite{bolt2019interacting} is limited to describing only what we may call \emph{crisp} concepts, which are such that any point of the conceptual space either strictly is or is not a member, with no `grey areas'. In contrast, most discussions of concepts in the cognitive science literature acknowledge that concepts should be \emph{fuzzy} or \emph{graded} in the sense that for any point $x$ the degree of membership of a concept $C$ should form a scalar value $C(x) \in [0,1]$. For example, \Gardenfors{} suggests defining fuzzy membership based on distance from a central region \cite{gardenfors2014geometry} representing a \emph{prototype}  \cite{rosch1973natural}. 
  
  In this work we propose a mathematical definition of fuzzy concepts which is compositionally well-behaved and contains crisp concepts (convex regions) as a special case. Specifically we propose that fuzzy concepts on a space $X$ should be given by (measurable) \emph{log-concave} functions $C \colon X \to [0,1]$. We prove that these are essentially the largest class of functions which are closed compositionally and satisfy the criterion of \emph{quasi-concavity}, identified implicitly by \Gardenfors{} \cite[\S 2.8]{gardenfors2014geometry}, 
  which ensures that any point $z$ lying `in-between' two points $x, y$ belongs to the concept `as much' as they do. 
 
 Beyond concepts, a categorical approach is well-suited to describing \emph{processes} between spaces. To describe fuzzy processes between convex spaces mathematically, one typically works in the symmetric monoidal category $\ProbC$ whose 
 morphisms are probabilistic \emph{channels} $f \colon X \kto Y$ \cite{lawvere1962category,giry1982categorical,panangaden1998probabilistic}. These send each point $x$ of $X$ to a (sub-)probability measure (distribution) over $Y$. In this work, to model fuzzy \emph{conceptual} processes we introduce \emph{log-concave channels}, and prove that they form a symmetric monoidal subcategory $\LogCon$ of $\ProbC$. In particular, the \emph{effects} on a space $X$ in $\LogCon$ correspond precisely to the fuzzy concepts in our sense, while the \emph{states} of $X$ correspond to the widely studied class of \emph{log-concave probability measures} over the space $X$ \cite{saumard2014log,klartag2005geometry}. The latter include many standard distributions such as Gaussians, allowing us to model `noisy inputs' to our processes. More general morphisms $X \kto Y$ in $\LogCon$ may be seen as transformations of fuzzy concepts. 

   There are many avenues for further exploration of $\LogCon$ as a model of fuzzy conceptual processes, such as in the modelling of metaphors as maps between conceptual spaces, and in describing concepts formed by neural network systems with noisy inputs such as $\beta$-VAEs \cite{higgins2016beta}. More broadly, $\LogCon$ may be of wider use in categorical probability theory by providing a novel example of a \emph{Markov category} \cite{fritz2020representable}.

\paragraph{Related work} 
Our work extends the model of  Bolt et al.\  \cite{bolt2019interacting} to fuzzy concepts. Other such extensions include \cite{Wangfunctionals}, which considers arbitrary measurable functions into the interval $[-1,1]$, and \cite{coecke2018generalized}, which works with `generalised relations' rather than measure theory. Our definition of fuzzy concept is inspired by that of \Bechberger{} and \Kuhnberger{} \cite{bechberger2017thorough}, though they replace convexity by star-shapedness. 

\paragraph{Structure of article} 
We recall convex conceptual spaces and crisp concepts (Section \ref{sec:Conceptual-Spaces}), before proposing and justifying our definition of fuzzy concepts as log-concave functions (Section \ref{sec:fuzzy-concepts}). Next we recap categorical probability theory (Section \ref{sec:prob-cat}), before defining the category $\LogCon$ of log-concave channels as a model of conceptual processes (Section \ref{sec:log-concave-channels}). Our main results, Theorems \ref{thm:LogCon-anSMC} and \ref{thm:LogCon-Channels-Canonical},  prove that $\LogCon$ is the `largest' monoidal category whose effects are fuzzy concepts. We close by constructing examples of log-concave channels (Section \ref{sec:log-chan-examples}) and giving a toy example of conceptual reasoning (Section \ref{sec:toy-example}).

\section{Conceptual Spaces} \label{sec:Conceptual-Spaces}

Peter \Gardenfors'{} framework of \emph{conceptual spaces} provides an approach to the modelling of human and artificial conceptual reasoning, motivated by the cognitive sciences and mathematically based on the notion of `convexity' \cite{gardenfors2004conceptual,gardenfors2014geometry}. In this approach,  a conceptual space $C$ (such as that of images, foods or people) is described as a product of typically simpler spaces called `domains' (such as those of colours, sounds, tastes, temperatures \dots). Based on psychological experiments and arguments around learnability, \emph{concepts} are modelled as regions of a conceptual space which are \emph{convex}, meaning that any point lying in-between two instances of a concept is also an instance of that concept. In this article we work with an abstract definition of a conceptual space, without explicit reference to domains.

 We begin from the formalisation due to Bolt et al.~{}in terms of `convex algebras' \cite{bolt2019interacting}. Formally, these are algebras for the finite distribution monad. In detail, for any set $X$ we write $D(X)$ for the set of formal finite convex sums $\sum^n_{i=1} p_i |x_i\rangle$ of elements $x_i$ of $X$, where each $p_i \in [0,1]$ with $\sum^n_{i=1} p_i = 1$. These formal sums satisfy natural conditions suggested by the notation: for example, the order of the $p_i |x_i\rangle$ is irrelevant, and the sum is equal to $|x_i\rangle$ when $p_i = 1$.
 
\begin{definition}
A \emph{convex algebra} is a set $X$ coming with a function $\alpha \colon D(X) \to X$ satisfying 
\[
\alpha(|x\rangle) = x 
\qquad 
\alpha(\sum_i p_i \alpha(\sum_j q_{i,j} |x_{i,j} \rangle)) = 
\alpha(\sum_{i,j} p_i q_{i,j} |x_{i,j} \rangle)
\]
For any elements  $x_i \in X$ and positive weights $p_i$ with $\sum_i p_i = 1$ we may thus define a convex combination 
\begin{equation} \label{eq:conv-comb-notn} 
 \sum^n_{i=1} p_i x_i :=\alpha( \sum^n_{i=1} p_i |x_i\rangle) \in X 
\end{equation} 
We will denote binary convex combinations by 
\[
x\plusp y := px + (1-p)y
\]for $x,y \in X$ and $p \in [0,1]$. A map of convex algebras $f \colon X \to Y$ is called \emph{affine} when $f(\sum^n_{i=1} p_i x_i) = \sum^n_{i=1} p_i f(x_i)$ for all convex combinations. 
\end{definition} 

To discuss fuzzy notions later, we will require the tools of probability theory, and thus consider spaces which are \emph{measurable}. Recall that a \emph{measurable space} is a set $X$ with a $\sigma$-algebra $\Sigma_X \subseteq \pset(X)$, a family of subsets, which are called \emph{measurable}, which contains $X$ itself and is closed under complements and countable unions. A map of measurable spaces $f \colon X \to Y$ is  \emph{measurable} if $f^{-1}(M) \in \Sigma_X$ whenever $M \in \Sigma_Y$. Our basic model of a conceptual space is now the following. 

\begin{definition}
By a \emph{convex space} we mean a convex algebra $(X, \alpha)$ which is also a measurable space. A \emph{crisp concept} of $X$ is a measurable subset $C$ which is \emph{convex}, meaning that whenever $x_1, \dots, x_n \in C$ then $\sum^n_{i=1} p_i x_i \in C$ also. We denote the set of crisp concepts of $X$ by $\CC(X)$. 
\end{definition}

\begin{lemma} \label{lem:convex-comb-concepts} 
Let $X$ be a convex algebra. Then the convex subsets of $X$ themselves form a convex algebra via the Minkowski sum:
\begin{equation} \label{eq:Mink-sum}
A \plusp B := \{ a \plusp b \mid a \in A, b \in B\}
\end{equation} 
for $p \in [0,1]$. Hence if $X$ is a convex space such that $A \plusp B$ is measurable for all crisp concepts $A, B$, then $\CC(X)$ forms a convex algebra.
\end{lemma}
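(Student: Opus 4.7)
The plan is to verify the convex algebra axioms directly on the power set of $X$ using the Minkowski operation, then to transfer this structure to $\CC(X)$ using the given measurability hypothesis together with a small induction.

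First I would show that $A\plusp B$ is convex whenever $A,B$ are. Take two elements $c_i = a_i \plusp b_i$ of $A\plusp B$ for $i=1,2$, and any $q\in[0,1]$. Applying the associativity/distributivity axiom of the convex algebra $X$ termwise, the combination rearranges as
\[
(a_1 \plusp b_1)\plusq(a_2 \plusp b_2) = (a_1 \plusq a_2) \plusp (b_1 \plusq b_2),
\]
which lies in $A\plusp B$ by convexity of $A$ and $B$. The same `interchange' trick extends immediately to $n$-ary Minkowski sums $\sum_i p_i A_i := \{\sum_i p_i a_i \mid a_i \in A_i\}$, showing these are convex as well.

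Second I would check that the collection of convex subsets forms a convex algebra under the operation $\sum_i p_i |A_i\rangle \mapsto \sum_i p_i A_i$. The unit law is immediate since $1\cdot A = A$. For the associativity axiom, both sides unfold by direct computation to the same set
\[
\left\{ \sum_{i,j} p_i q_{i,j}\, a_{i,j} \;\middle|\; a_{i,j} \in A_{i,j} \right\},
\]
where on the left one first uses the axiom pointwise inside each inner Minkowski sum $\sum_j q_{i,j} A_{i,j}$, and on the right one reads it off directly; equality then follows from the corresponding axiom in $X$.

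Finally, for the claim about $\CC(X)$, the hypothesis gives measurability of binary Minkowski sums of crisp concepts. A routine induction using the identity $\sum^n_{i=1} p_i A_i = A_1 \plus_{p_1} \bigl(\sum^n_{i=2} \tfrac{p_i}{1-p_1} A_i\bigr)$ (when $p_1 < 1$; the case $p_1=1$ being trivial) then shows all $n$-ary Minkowski sums of crisp concepts are measurable, hence themselves crisp concepts. Therefore the convex algebra structure on convex subsets of $X$ restricts to $\CC(X)$. No step is a real obstacle; the only mild care needed is the indexing bookkeeping in the associativity verification.
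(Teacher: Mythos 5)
Your proof is correct. The paper gives no proof of this lemma at all (it is left as a routine verification), and your argument --- the medial/interchange law $(a_1 \plusp b_1)\plusq(a_2 \plusp b_2) = (a_1 \plusq a_2) \plusp (b_1 \plusq b_2)$ for convexity of Minkowski sums, the direct check of the unit and flattening axioms on $n$-ary Minkowski sums, and the induction reducing measurability of $n$-ary sums to the binary hypothesis --- is exactly the standard check the authors leave implicit.
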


\begin{examples} \label{ex:conceptual-spaces}
Let us consider some examples of convex spaces and their crisp concepts; for more see \cite{bolt2019interacting}. 
\begin{enumerate} 
\item 
The unit interval $[0,1]$ forms a convex space with crisp concepts as sub-intervals. 
\item 
Any normed vector space $(X, \| - \|)$ forms a convex space via its Borel $\sigma$-algebra, which is generated by the open subsets. In particular $X = \mathbb{R}^n$ forms a convex space with either its Borel or Lebesgue $\sigma$-algebras. The crisp concepts are (measurable) convex subsets in the usual sense. 

\item 
Any convex measurable subset (crisp concept) $C$ of a convex space $X$ is again a convex space.
\item 
Any convex algebra $(X, \alpha)$ forms a convex space by using the \emph{discrete} $\sigma$-algebra $\Sigma_X = \pset(X)$. 

\item 
Any join semi-lattice $(X, \vee)$ forms a convex algebra (and hence space) by taking $x \plusp y = x \vee y$ for all $p \in (0,1)$ \cite{bolt2019interacting}, with crisp concepts as $\vee$-closed subsets. This allows one to consider discrete convex spaces, such as truth values $\{0,1\}$. 

\item \label{enum:product}
The \emph{product} of convex spaces $X, Y$ is the convex space on $X \times Y$ with operations 
\[
\sum^n_{i=1} p_i(x_i,y_i) = (\sum^n_{i=1}p_ix_i,\sum^n_{i=1}p_iy_i)
\]
for $x_i \in X$, $y_i \in Y$, and equipped with the \emph{product} $\sigma$-algebra $\Sigma_{X \times Y}$, the algebra generated by the subsets of the form  $A \times B$ for $A \in \Sigma_X$ and $B \in \Sigma_Y$. In particular when $C \in \CCon(X), D\in \CCon(Y)$ then $C \times D \in \CCon(X \times Y)$.
\item \label{enum:food-space-initial} 
In \cite{bolt2019interacting} toy conceptual spaces of colours and tastes are defined as follows. \emph{Colour space} is defined as the 3-dimensional cube 
\[
C=[0,1]^3 = \{(R,G,B) \mid  0 \leq R,G,B \leq 1\}
\]
of red-green-blue intensities. Specific points include (pure) green $g:= (0,1,0)$, yellow $ y:=(1,1,0)$, etc. We can define a crisp concept `green' for example as the (convex) open ball $G=B^\epsilon_g$ around green of a given radius $\epsilon > 0$, or more sharply as the singleton $\{g\}$. A simple \emph{taste space} $T$ is defined as the convex space (simplex) in $\mathbb{R}^4$ generated by the four points sweet, bitter, salt, and sour
\[
T = \{(t_1,t_2,t_3,t_4) \mid t_i \geq 0, \sum t_i = 1\}
\]
By taking the product of these convex spaces, we can form a toy \emph{food space} as 
\[
F = C \times T
\]
in which each food is modelled by a concept relating its colours and tastes. 
\item 
Any set of \emph{exemplar} points $E$ in a convex space define a convex set via their \emph{convex closure} $\overline{E}$, which is defined as the intersection of all convex subsets $C \subseteq X$ containing $E$, or equivalently as its set of convex combinations 
\[
\overline{E} = \{ \sum^n_{i=1} p_i e_i \mid e_i \in E\} 
\]
In spaces such as $\mathbb{R}^n$, the set $\overline{E}$ will be a closed crisp concept. We can think of $\overline{E}$ as a concept `learned' from these exemplars, with the convex closure allowing us to infer new instances of the concept. 
\end{enumerate} 
\end{examples}

\section{Fuzzy Concepts} \label{sec:fuzzy-concepts} 

The concepts described so far have been crisp, or `sharp', in that every element $x \in X$ either is or is not a member of the concept $C$, with either $x \in C$ or $x \notin C$. Real-life concept membership is arguably a more `fuzzy' notion, taking a value in the range $[0,1]$. For example, the concept `tall' can be seen as applying to a person in such a graded way (determined by their height). Fuzzy (continuous) concepts are also more easy to learn in neural networks, allowing for gradient descent, while crisp (discrete) ones require workarounds such as in \cite{maddison2016concrete}. We will take a concept to be a `fuzzy set', a map 
\[
C \colon X \to [0,1]
\]
where $C(x) \in [0,1]$ denotes the extent to which $x$ is an instance of the concept $C$. Such mappings are partially ordered, point-wise with $C \leq D$ whenever $C(x) \leq D(x) \forall x$.

Now fuzzy concepts should not be arbitrary mappings, but respect the convex structure of $X$ appropriately. \Gardenfors{} has suggested one structural feature that fuzzy concepts should satisfy, which amounts to the following requirement \cite[\S2.8]{gardenfors2014geometry}.\footnote{The analogous criterion for fuzzy \emph{star-shaped} sets is considered in \cite{bechberger2017thorough}.}

\begin{criterion} \label{crit:QC}
Let $X$ be a convex space. Fuzzy concepts $C \colon X \to [0,1]$ should be \emph{quasi-concave}, meaning that for all $x, y \in X$, $p \in [0,1]$ we have 
\[
C(x \plusp y) \geq \min \{C(x), C(y)\}
\]
Equivalently, each \emph{$t$-cut} $C_t := \{x \in X \mid C(x) \geq t\}$ should be a convex subset of $X$, for $t \in [0,1]$. 
\end{criterion}

This requirement is a natural one, stating that if $x$ and $y$ are both members of a concept to degree $t \in [0,1]$, then so is any point lying `between' them. Practically, it allows one to understand a fuzzy concept $C$ in terms of its `cuts' $\{C_t\}_{t \in [0,1]}$, ensuring that these will indeed form crisp concepts. 

 However, quasi-concavity is not fully sufficient if we wish to develop a compositional theory of fuzzy concepts, due to the following observation. For any $[0,1]$-valued maps $C$ on $X$ and $D$ on $Y$, we define
 \begin{align*} 
 C \otimes D \colon X \times Y &\to [0,1] \\ 
 (x,y) &\mapsto C(x)D(y)
 \end{align*}

\begin{remark}[Quasi-concavity is not compositional] For quasi-concave functions $C$ on $X$ and $D$ on $Y$, the map $C \otimes D$ is generally not quasi-concave. For example, take $X = Y = [0,1]$ with $C(x) = 1 - \frac{x}{2}$ and $D(y) = \frac{y^2 + 1}{2}$. Then $C \otimes D$ acts as $(0,0), (1,1) \mapsto \frac{1}{2} > 0.46875  \mapsfrom (\frac{1}{2},\frac{1}{2})$. 
\end{remark} 
 
 Hence we require a stricter definition to ensure that concepts may be composed. Luckily, there is a well-known class of quasi-concave functions which provide a well-behaved definition of fuzzy concept.

\begin{definition}[{Log-concavity/Fuzzy concepts}]  \label{def:fuzzy-concept} 
Let $X$ be a convex algebra. A function $f \colon X \to \mathbb{R}$ is \emph{log-concave} when for all $x, y \in X$ and $p \in [0,1]$ we have
\[
f(x \plusp y) \geq f(x)^pf(y)^{1-p}
\]
We define a \emph{fuzzy concept} on a convex space $X$ to be a measurable log-concave function $C \colon X \to [0,1]$. We denote the set of fuzzy concepts on $X$ by $\FCon(X)$.  
\end{definition} 

Any function $f$ which is \emph{concave}, with $f(x \plusp y) \geq f(x) \plusp f(y)$, is log-concave. Any log-concave function is quasi-concave. A function $f$ is log-concave iff 
\[
\log \circ f \colon X \to [-\infty, \infty]\] 
is concave, or equivalently iff
$f(x) = e^{u(x)}$
 with $u(x)$ concave. Log-concave functions on spaces  $\mathbb{R}^n$ form a well-studied class in statistics, including many standard functions from probability theory \cite{saumard2014log,klartag2005geometry}. They are known to be well-behaved under operations such as products, convolutions and marginalisation. 

Our definition of fuzzy concept is justified by the following result. Write $\QCon(X)$ for the set of quasi-concave functions $X \to [0,1]$.


\begin{theorem}[Log-concavity is canonical]  \label{thm:fuzzy-concepts-canonical} 
Let $\catC(X)$ be a set of measurable functions $X \to [0,1]$ on each convex space $X$ which together satisfy conditions \ref{enum:qc}, \ref{enum:comp}, and either \ref{enum:aff} or \ref{enum:exp}, below. 
\begin{enumerate} 
\item \label{enum:qc}
Each $\catC(X) \subseteq \QCon(X)$; 
\item \label{enum:comp}
$C \otimes D \in \catC(X \otimes Y)$ whenever $C \in \catC(X)$, $D \in \catC(Y)$; 
\item \label{enum:aff}
$\catC([0,1])$ contains all affine functions $[0,1] \to [0,1]$
\item \label{enum:exp}
$\catC([0,1])$ contains all exponential functions $x \mapsto \lambda^{-x}$ for $\lambda \geq 1$. 
\end{enumerate} 
Then $\catC(X) \subseteq \FCon(X)$ for all $X$. Conversely, $\FCon(X)$ satisfies all of the conditions \ref{enum:qc} - \ref{enum:exp}. 
\end{theorem}

\begin{proof}
We have seen that log-concave functions satisfy these properties. Conversely, suppose we have such a set of functions $\catC(X)$ on each convex space $X$. Fix a convex space $X$ and let $C \in \catC(X)$, with $x, y \in X$ and $p \in [0,1]$. We will show that 
\begin{equation} \label{eq:log-c-fn-example}
C(x \plusp y) \geq C(x)^p C(y)^{1-p}
\end{equation} 
so that $C$ is log-concave. If either $C(x)$ or $C(y)$ is zero this is trivial. Otherwise without loss of generality suppose $c := \frac{C(y)}{C(x)} < 1$. Suppose that $h \colon [0,1] \to [0,1]$ is a function satisfying 
\begin{equation} \label{eq:useful-condn}
\begin{cases} 
h(0) = \lambda \\ 
h(p) = \lambda c^p  \\ 
h(1) = \lambda c 
\end{cases}
\end{equation} 
for some $\lambda \in (0,1)$, and such that $C \otimes h$ is quasi-concave. Then by rescaling, for simplicity we may assume $\lambda = 1$. Then we have 
\begin{align*} 
C(x \plusp y)h(p) 
&\geq 
\min\{C(x)h(0),C(y)h(1)\}
\\&=
\min\{C(x),C(y)c\} = C(x)
\end{align*} 
Multiplying both sides by ${\frac{C(y)}{C(x)}}^{1-p}$ then yields precisely \eqref{eq:log-c-fn-example}.

Now suppose that $\catC([0,1])$ contains the exponential functions as above. Then $h(x)=c^x$ for $x \in [0,1]$ satisfies \eqref{eq:useful-condn} and by assumption $C \otimes h \in \catC(X \otimes [0,1])$, making it quasi-concave, and so we are done. 

Next suppose instead that $\catC([0,1])$ contains the affine functions. To establish the result, we will show for any $p,c \in (0,1)$ that there exists a quadratic polynomial $h$ with real roots satisfying \eqref{eq:useful-condn}, with $h([0,1]) \subseteq [0,1]$. Then by rescaling further if necessary this means that we can write $h(x) = q(x)r(x)$ where $q, r \colon [0,1] \to [0,1]$ are affine functions, i.e. of the form $x \mapsto Ax + B$. By assumption the function $C \otimes q \otimes r \colon (x,y,z) \mapsto C(x)q(y)r(z)$ belongs to $\catC(X \otimes [0,1] \otimes [0,1])$, making it quasi-concave. Since the map $y \mapsto (y,y)$ is affine, this means that $C \otimes h \colon X \otimes [0,1] \to [0,1]$ is quasi-concave also, and we are done. 
 
It remains to specify such a quadratic polynomial $h$. 
Let $ d:= \frac{c^p -c - (1-c)(1-p)}{p(1-p)}$ and define 
\[
g(x) = (1-c + dx)(1-x) + c 
=-dx^2 + (c +d - 1)x + 1
\]Then $g(0) = 1$, $g(1) =c$ and $g(p) = c^p$. Note that $d$ is positive; after rearranging the numerator this can be seen to follow from the fact that 
$p(c-1) \leq c-1 \leq c^p -1$.   

Hence $g$ has has two real roots, and since $g(0), g(1) > 0 $ these lie outside of $[0,1]$. So we can write  $g(x) = k (x-a)(b-x)$ where $a<0, b>1$ and $k > 0$. Hence $g(x) = K q(x)q'(x)$ where $q$ and $q'$ are affine functions $[0,1] \to [0,1]$, for appropriate positive scalar $K$. Thus $h(x) = q(x)q'(x)$ is our desired function in $\catC([0,1])$, and we are done. 
\end{proof}

Hence if we accept quasi-concavity (Criterion \ref{crit:QC}) and wish fuzzy concepts to be both closed under tensors and to include a few basic examples, then log-concave functions are the broadest definition we can take.

\begin{examples} \label{examples:fuzzy-concepts} 
Let us meet some examples of fuzzy concepts. 
\begin{enumerate} 
\item 
Crisp concepts $M \subseteq X$ correspond precisely to fuzzy concepts $1_M$ on $X$ taking values only in $\{0,1\}$, via  
\[
1_M(x) = \begin{cases} 1 & x \in M \\ 0 & x \notin M \end{cases} 
\]
Indeed one may see that $1_M$ is log-concave iff $M$ is convex, and measurable iff $M$ is. We will often identify a crisp concept $M \subseteq X$ with its fuzzy concept $1_M \colon X \to [0,1]$. 

\item 
Any affine measurable map $X \to [0,1]$ is concave and hence a fuzzy concept.

\item \label{enum:Gaussian-fuzzification}
Let $X$ be a normed space with metric $d$, and $P \subseteq X$ a closed crisp concept. Then for any  $\sigma^2 \geq 0$ we can define a Gaussian `fuzzification' of $P$ as the fuzzy concept 
\begin{equation} \label{eq:Gaussian-fuzzy} 
N^P_\sigma(x):= e^{-\frac{1}{2\sigma^2}d_H(x,P)^2} 
\end{equation} 
where $d_H$ denotes the Hausdorff distance $d_H(x,A) := \inf_{a\in A} d(x,a)$ for $A \subseteq X$. Here $P$ provides the `prototypical' region in which the concept takes values $1$. The concept tends to $0$ as we move away from $P$ at a rate determined by the variance $\sigma^2$. The limit case $\sigma=0$ corresponds to the crisp concept $1_P$. An example plot of such a fuzzy concept is shown in Figure 1. 

Many statistical functions besides Gaussians are log-concave also, providing alternative `fuzzification' procedures to \eqref{eq:Gaussian-fuzzy}. 

\begin{figure} \label{figure:fuzzy} 
\begin{center}
\includegraphics[scale=0.5]{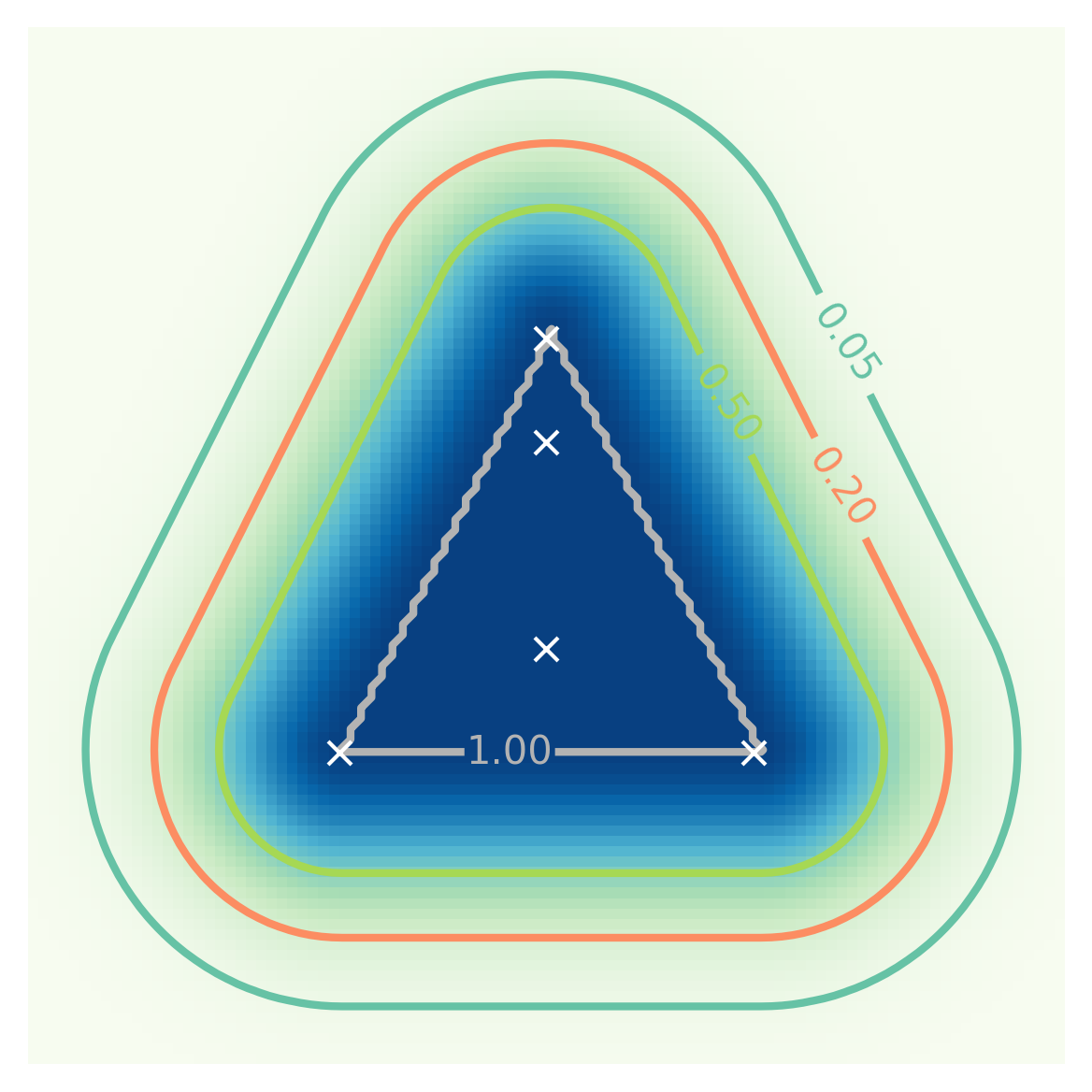}
\end{center}
\begin{caption} \ 
Visualization of a fuzzy concept in $\mathbb{R}^2$. From a set of exemplars (white crosses) we form their convex closure, yielding the crisp concept given by the inner triangle. We then form a Gaussian fuzzification as in \eqref{eq:Gaussian-fuzzy}. Each $t$-cut of the concept (within the contour lines) is a (convex) crisp concept. 
\end{caption}
\end{figure}

\item 
For any Hilbert space $\hilbH$, any quantum effect $a \in B(\hilbH)$ with $0 \leq a \leq 1$ provides an (affine) fuzzy concept $\Tr(a-)$ on the convex space of density matrices of $\hilbH$. 

\item 
Let $L$ be a finite semi-lattice viewed as a convex space with $\Sigma_L = \pset(L)$. A fuzzy concept on $L$ is a monotone map $L \to [0,1]$. 

\end{enumerate} 
\end{examples}

\section{Probabilistic Channels} \label{sec:prob-cat}

Our next goal is to introduce a category of fuzzy \emph{processes} between spaces. To do so, in this section we must briefly recall the categorical treatment of fuzzy (probabilistic) mappings, also known as `channels'. 

Recall that a finite \emph{measure} on a measurable space $(X, \Sigma_X)$ is a function $\omega \colon \Sigma_X \to \mathbb{R}$ which is additive on countable disjoint unions, with $\omega(\emptyset) = 0$. A \emph{subprobability measure} has $\omega(X) \leq 1$ while a \emph{probability measure} has $\omega(X) = 1$. These provide a general notion of `distribution' over such a space $X$. A standard approach to probability is to work in the following category, known as that of \emph{probabilistic relations} \cite{panangaden1998probabilistic}, or more abstractly as the Kleisli category of the (sub-)\emph{Giry Monad} \cite{giry1982categorical,lawvere1962category}. 
While the objects of the category are usually general measurable spaces, here we restrict to convex spaces from the outset.

\begin{definition}
In the symmetric monoidal category $\ProbC$ the objects are convex spaces and the morphisms $f \colon X \kto Y$ are \emph{channels}, also known as \emph{Markov kernels},  i.e., functions $f \colon X \times \Sigma_Y \to [0,1]$ such that 
\begin{enumerate} 
\item 
$f(x,-) \colon \Sigma_Y \to [0,1]$ is a subprobability measure on $Y$, for each $x \in X$; 
\item 
$f(-,M)\colon X \to [0,1]$ is measurable, for each $M \in \Sigma_Y$.
\end{enumerate} 
As above we write $f$ for both the morphism and function on $X \times \Sigma_Y$, and at times write $f(x) := f(x,-)$. Thus a channel sends each $x \in X$ to a `(sub)probability distribution' $f(x)$ over $Y$, in a measurable way.  Given another channel $g \colon Y \kto Z$, the composite channel $(g \circ f) \colon X \kto Z$ is defined by \[
(g \circ f)(x,M) := \int_{y \in Y} g(y,M) df(x)(y)
\] 
for each $x \in X, M \in \Sigma_Z$. The identity channel $X \kto X$ sends each $x \in X$ to the \emph{point measure} 
\[
\delta_x(M) = \begin{cases} 1 & x \in M \\ 0 & x \notin M \end{cases}
\]
The unit object $I$ is the singleton set, with $X \otimes Y = X \times Y$, the product of convex spaces. For channels $f \colon X \kto W$ and $g \colon Y \kto Z$ we define $f \otimes g \colon X \otimes Y \to W \otimes Z$ by 
\begin{equation} \label{eq:prod-meas-compn}
(f \otimes g)((x,y),(A,B)) = f(x,A) g(y,B) 
\end{equation} 
for each $x \in X, y \in Y$, $A\in \Sigma_W, B \in \Sigma_Z$. Since the measures $f(x), g(y)$ are finite, this in fact specifies $(f \otimes g)(x,y)$ over $\Sigma_{W \times Z}$ uniquely as the \emph{product measure} $f(x) \otimes g(y)$ of the measures $f(x)$ and $g(y)$.

As special cases, \emph{states} $\omega \colon I \kto X$ of $X$ may be identified with a sub-probability measures over $X$, \emph{effects} $C \colon X \kto I$ with measurable functions $C \colon X \to [0,1]$, and \emph{scalars} $I \kto I$ with probabilities $p \in [0,1]$. 

\end{definition}

\section{The Category of Log-Concave Channels} \label{sec:log-concave-channels} 

We can now generalise our notion of fuzzy concept to define a symmetric monoidal category of `fuzzy conceptual processes'. These aim to model cognitive transformations of (fuzzy) concepts. Examples include reasoning processes, metaphorical mappings between domains \cite{gardenfors2001reasoning}, and word meanings as described in the DisoCat formalism for NLP in compact \cite{coecke2010mathematical,bolt2019interacting} and more generally monoidal \cite{coecke2019mathematics,delpeuch2014autonomization} categories. 

\begin{definition}[Log-Concave Channels]
We call a channel $f \colon X \kto Y$ between convex spaces $X, Y$ \emph{log-concave} when its  kernel $f \colon X \times \Sigma_Y \to [0,1]$ is log-concave on convex subsets. That is, we have 
\begin{equation} \label{eq:lc-channel}
f(x \plusp y, A \plusp B) \geq f(x,A)^pf(y,B)^{1-p}
\end{equation}
for all $x, y \in X$ and convex $A, B \in \Sigma_Y$ for which $A \plusp B$ is measurable. 
\end{definition} 

We also call such a channel a \emph{conceptual channel}. Note that a conceptual channel $C \colon X \to I$ is precisely a fuzzy concept on $X$. Many more examples are given in the next section.

\begin{definition}[The Category $\LogCon$]
We define $\LogCon$ to be the symmetric monoidal subcategory of $\ProbC$ whose objects are convex spaces and whose morphisms are log-concave channels. 
\end{definition}

To establish that $\LogCon$ is indeed a well-defined category is non-trivial, requiring an extension of the following central result in the study of log-concave functions.

\begin{lemma}[\PL{} inequality \cite{prekopa1971logarithmic}] \label{lem:PL}  Let $0 < p < 1$ and $f,g ,h$ be non-negative measurable functions on $\mathbb{R}^n$ satisfying
\begin{equation} \label{eq:PL-Condn}
f(x \plusp y) \geq  
g(x)^ph(y)^{1-p}
\end{equation} 
for all $x,y$. 
Then 
\[
\int_{\mathbb{R}^n} f(z) dz \geq \left( \int_{\mathbb{R}^n} g(x) dx \right)^p \left( \int_{\mathbb{R}^n} h(y) dy \right)^{1-p}
\]
\end{lemma}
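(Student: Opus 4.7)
The plan is to prove this classical result by induction on the dimension $n$, establishing the one-dimensional case directly via a change-of-variables argument and then reducing the general case to dimension one using Fubini's theorem.

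For the base case $n=1$, I would first dispose of the trivial case in which one of $\int g$ or $\int h$ vanishes, and then reduce by rescaling $g, h$ (and correspondingly $f$) to the normalised setting $\int g = \int h = 1$. Defining the distribution functions $G(x) = \int_{-\infty}^x g(s)\,ds$ and $H(x) = \int_{-\infty}^x h(s)\,ds$ together with their generalised inverses $u, v \colon (0,1) \to \mathbb{R}$, one has $g(u(t))\,u'(t) = 1$ and $h(v(t))\,v'(t) = 1$ almost everywhere. Setting $w(t) := u(t) \plusp v(t)$, the arithmetic-geometric mean inequality yields $w'(t) \geq u'(t)^p\, v'(t)^{1-p}$, and combining with the assumed inequality on $f$ gives $f(w(t))\, w'(t) \geq 1$ almost everywhere. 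Since $w$ is strictly increasing, a change of variables then produces
\[
\int_{\mathbb{R}} f(z)\,dz \;\geq\; \int_0^1 f(w(t))\, w'(t)\, dt \;\geq\; 1,
\]
which is the claim.

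For the inductive step, assume the result in dimension $n-1$. Writing coordinates $z = (z_1, z') \in \mathbb{R} \times \mathbb{R}^{n-1}$, I would set $F_s(z') := f(s,z')$ and define $G_s, H_s$ similarly. For any $a, b \in \mathbb{R}$ with $c = a \plusp b$, the hypothesis on $f, g, h$ transfers immediately to the slices $F_c, G_a, H_b$ on $\mathbb{R}^{n-1}$, and so the induction hypothesis applied slicewise produces the pointwise inequality
\[
\Phi(c) \geq \Gamma(a)^p\, \Psi(b)^{1-p},
\]
where $\Phi(s) := \int_{\mathbb{R}^{n-1}} F_s$, and $\Gamma, \Psi$ are defined analogously. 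This is precisely the one-dimensional hypothesis for $\Phi, \Gamma, \Psi$, so invoking the base case and then Fubini to rewrite $\int_{\mathbb{R}} \Phi = \int_{\mathbb{R}^n} f$ (and similarly for $g, h$) closes the induction.

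The main obstacle I expect is a technical one concerning measurability: one must verify that the sliced marginals $\Phi, \Gamma, \Psi$ are measurable functions on $\mathbb{R}$, and that the generalised inverses $u, v$ employed in the base case are regular enough to justify the change of variables. The standard workaround is to first approximate $f, g, h$ by bounded upper-semicontinuous functions of compact support, for which these points are routine, and then recover the general case by monotone convergence; one should also check separately that the degenerate cases in which $\int g$ or $\int h$ vanishes (making the right-hand side zero) are handled trivially.
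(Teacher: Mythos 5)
The paper does not actually prove this lemma --- it is imported verbatim from the literature (Pr\'ekopa's paper is cited), so there is no in-paper argument to compare against; note in passing that the displayed conclusion contains a typo, the first factor on the right should be $\left(\int_{\mathbb{R}^n} g(x)\,dx\right)^p$. Your proposal is the standard textbook proof and is correct in outline: the one-dimensional case by normalisation and monotone transport, where the weighted AM--GM inequality $p\,u'(t)+(1-p)\,v'(t)\ge u'(t)^p v'(t)^{1-p}$ combines with the hypothesis \eqref{eq:PL-Condn} and $g(u(t))u'(t)=h(v(t))v'(t)=1$ a.e.\ to give $f(w(t))w'(t)\ge 1$, and the inductive step by slicing and Fubini, which works because \eqref{eq:PL-Condn} restricts correctly to the slices $F_{a\plusp b}$, $G_a$, $H_b$. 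The caveats you flag (measurability of the marginals, regularity of the generalised inverses, the degenerate case $\int g=0$ or $\int h=0$) are precisely the points needing care, and approximation by nicer functions followed by monotone convergence is the standard fix. It is worth observing that the paper itself contains a genuinely different route to the same $n$-dimensional statement: Theorem \ref{thm:PL-extended} uses the layer-cake decomposition over superlevel sets to reduce everything to the one-dimensional \PL{} inequality plus log-concavity of the measures involved (for Lebesgue measure, Brunn--Minkowski), as the remark after it points out. Your induction-on-dimension argument avoids Brunn--Minkowski but still rests on the same one-dimensional base case; the paper's level-set route has the advantage of generalising to arbitrary convex spaces and triples of $\sigma$-finite measures, which is what the proof of Theorem \ref{thm:LogCon-anSMC} actually needs.
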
 

We now extend this result as follows. We say that a measure $\mu$ on a measurable space $X$ is \emph{$\sigma$-finite} if $X$ can be written as a countable union of sets $X_i$ with $\mu(X_i) \leq \infty$.

\begin{theorem}[Extended \PL{} inequality] \label{thm:PL-extended}
Let $X$ be a convex space, $p \in (0,1)$, and $\mu, \nu , \omega$ be $\sigma$-finite measures on $X$ satisfying 
\begin{equation} \label{eq:three-measures-rule}
\mu(C) \geq \nu(A)^p \omega(B)^{1-p}
\end{equation}
whenever $A, B, C \in \Sigma_X$ with $C \supseteq A \plusp B$. Let $f, g, h$ be non-negative measurable functions on $X$ satisfying \eqref{eq:PL-Condn} for all $x,y \in X$. Then
\begin{equation} \label{eq:PL-extended}
\left(\int_{X} f \ d\mu \right) 
\geq 
\left(\int_X g \ d\nu \right)^p 
\left(\int_X h \ d\omega \right)^{1-p}
\end{equation} 
Further, if $f, g, h$ are quasi-concave then we need only require \eqref{eq:three-measures-rule} for $A,B,C\in \Sigma_X$ which are convex.
\end{theorem}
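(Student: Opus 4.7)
The plan is to reduce the general statement to the classical one-dimensional Prékopa-Leindler inequality (Lemma~\ref{lem:PL} with $n=1$) by means of a layer-cake decomposition followed by a logarithmic change of variables.

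First I would apply the layer-cake identity $\int_X f \, d\mu = \int_0^\infty \mu(\{f>t\})\, dt$, valid for non-negative measurable $f$ and $\sigma$-finite $\mu$, and analogously for $g$ and $h$, thereby reducing both sides of \eqref{eq:PL-extended} to one-dimensional integrals of the superlevel-set measures $\gamma(t) := \mu(\{f>t\})$, $\alpha(s) := \nu(\{g>s\})$ and $\beta(u) := \omega(\{h>u\})$. The pointwise hypothesis $f(x \plusp y) \geq g(x)^p h(y)^{1-p}$ translates directly into the set containment
\[
\{g > s\} \plusp \{h > u\} \ \subseteq\ \{f > s^p u^{1-p}\} \qquad (s,u > 0),
\]
so invoking hypothesis \eqref{eq:three-measures-rule} with $A = \{g>s\}$, $B = \{h>u\}$ and $C = \{f > s^p u^{1-p}\}$ yields $\gamma(s^p u^{1-p}) \geq \alpha(s)^p \beta(u)^{1-p}$ for all $s,u > 0$.

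The key step is then the logarithmic substitution $s = e^x$, $u = e^y$, $t = e^z$, which sends the multiplicative constraint $t = s^p u^{1-p}$ to the affine one $z = px + (1-p)y$. Setting $\tilde\alpha(x) := e^x \alpha(e^x)$, $\tilde\beta(y) := e^y \beta(e^y)$ and $\tilde\gamma(z) := e^z \gamma(e^z)$ on $\mathbb{R}$, a short check shows that the factors of $e^{-z}$ appearing on both sides of the preceding inequality cancel, leaving precisely the one-dimensional Prékopa-Leindler hypothesis
\[
\tilde\gamma(px + (1-p)y) \geq \tilde\alpha(x)^p \tilde\beta(y)^{1-p} \qquad (x,y \in \mathbb{R}).
\]
Lemma~\ref{lem:PL} on $\mathbb{R}$ then gives $\int \tilde\gamma \geq (\int \tilde\alpha)^p (\int \tilde\beta)^{1-p}$, and reversing the substitution (so that $\int \tilde\alpha \, dx = \int_0^\infty \alpha(s) \, ds = \int g \, d\nu$, and similarly for $\tilde\beta, \tilde\gamma$) delivers \eqref{eq:PL-extended}.

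For the quasi-concave refinement, observe that when $f,g,h$ are quasi-concave all their superlevel sets $\{f>t\}, \{g>s\}, \{h>u\}$ are convex by definition, so the measure hypothesis is only ever invoked on convex $A, B, C$. I expect the main technical obstacle to be reconciling the boundedness assumption in Lemma~\ref{lem:PL} with the fact that $\tilde\alpha, \tilde\beta, \tilde\gamma$ may be unbounded near $z \to \pm\infty$; this should be handled by truncating $f, g, h$ at level $N$ and restricting $\mu, \nu, \omega$ to an exhausting family of finite-measure sets (using $\sigma$-finiteness), applying the bounded one-dimensional case, and passing to the limit via monotone convergence. Degenerate cases where an integral is zero or infinite reduce to trivial inequalities under the usual $0^0 = 1$ convention.
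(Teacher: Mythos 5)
Your proposal is correct and follows essentially the same route as the paper's own proof: the layer-cake identity, the superlevel-set containment $\{g>s\} \plusp \{h>u\} \subseteq \{f>s^pu^{1-p}\}$, the exponential change of variables to convert the multiplicative constraint into the affine one, and an appeal to the one-dimensional \PL{} inequality, with the quasi-concave refinement handled identically via convexity of superlevel sets. Your closing remark about reconciling the boundedness hypothesis of Lemma~\ref{lem:PL} by truncation and monotone convergence is a point of care the paper's proof passes over silently, but it does not change the approach.
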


\begin{proof}
First observe that if $t, t' \in \mathbb{R}$ with $g(x) > e^t$ and $h(y) > e^{t'}$ then   
\[
f(x \plusp y) \geq g(x)^p h(y)^{1-p} > (e^t)^p (e^{t'})^{(1-p)} = e^{t \plusp t'}
\]
Hence we have $f^{-1}(e^{t \plusp t'}, \infty) \supseteq g^{-1}(e^{t}, \infty) \plusp h^{-1}(e^{t'}, \infty)$. Now by assumption on the measures 
\begin{align*}
\mu(f^{-1}(e^{t \plusp t'}, \infty)) e^{t \plusp t'} 
&\geq 
\nu(g^{-1}(e^{t}, \infty))^p \omega(h^{-1}(e^{t'}, \infty))^{1-p} e^{t \plusp t'} 
\\&=
(\nu(g^{-1}(e^{t}, \infty)) \cdot e^{t})^p (\omega(h^{-1}(e^{t'}, \infty)) \cdot  e^{t'})^{1-p} 
\end{align*}  
for all $t, t' \in \mathbb{R}$. Applying a well-known consequence of Fubini's theorem, then integral substitution with $u = e^v$, and then finally the one-dimensional \PL{} inequality (Lemma \ref{lem:PL}) we have 
\begin{align*} 
 \int_X f \ d\mu 
&=
\int^{\infty}_0 \mu(f^{-1}(u,\infty)) du
=
\int^{\infty}_{-\infty} \mu(f^{-1}(e^{v}, \infty)) \cdot e^{v} dv  
\\ &\geq 
\left( \int^{\infty}_{-\infty} \nu(g^{-1}(e^{t}, \infty)) \cdot e^{t} dt  \right)^p 
\left( \int^{\infty}_{-\infty} \omega(h^{-1}(e^{t'}, \infty)) \cdot e^{t'} dt'  \right)^{1-p} 
\\ &= 
\left( \int_X g \ d\nu \right)^p 
\left( \int_X h \ d\omega \right)^{1-p}
\end{align*} 
as required. For the final statement observe that if $f, g, h$ are quasi-concave then each subset $r^{-1}(e^t, \infty)$ for $r=f,g,h$ will be convex.
\end{proof} 

\begin{remark} 
The above provides an alternative proof of the \PL{} inequality from only its one-dimensional form and log-concavity of the Lebesgue measure (though \PL{} is typically used to establish the latter fact in the first place). It would be interesting to explore whether Theorem \ref{thm:PL-extended} provides any novel applications of the inequality to more general convex spaces.
 \end{remark}

We now reach our main result. 

\begin{theorem} \label{thm:LogCon-anSMC}
$\LogCon$ is a well-defined symmetric monoidal subcategory of $\ProbC$.
\end{theorem}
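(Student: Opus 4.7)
The plan is to check three things: (i) identity channels and the structural isomorphisms (symmetry, associators, unitors) inherited from $\ProbC$ are log-concave; (ii) composition of log-concave channels is log-concave; and (iii) so is their tensor product. Part (i) is routine: each such morphism is a Dirac channel $x \mapsto \delta_{\alpha(x)}$ for an affine map $\alpha$, so verifying \eqref{eq:lc-channel} reduces to a trivial calculation on $\{0,1\}$-valued quantities, using that affine maps preserve convex combinations and Minkowski sums of convex sets.

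For (ii), given log-concave $f \colon X \kto Y$, $g \colon Y \kto Z$, points $x_1, x_2 \in X$, $p \in (0,1)$, and convex measurable $A, B \in \Sigma_Z$ with $A \plusp B$ measurable, I would apply the extended \PL{} inequality (Theorem \ref{thm:PL-extended}) on $Y$ with the three functions $\phi(y) := g(y, A \plusp B)$, $\psi(y) := g(y, A)$, $\chi(y) := g(y, B)$ and the three measures $f(x_1 \plusp x_2)$, $f(x_1)$, $f(x_2)$. Using the identity $A \plusp A = A$ for convex $A$, log-concavity of $g$ in its first argument yields that each of $\phi, \psi, \chi$ is itself log-concave (in particular quasi-concave and measurable) and also gives the PL-type inequality $\phi(y_1 \plusp y_2) \geq \psi(y_1)^p \chi(y_2)^{1-p}$. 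Log-concavity of $f$ supplies the three-measure condition \eqref{eq:three-measures-rule} on convex sets, and subprobability measures are trivially $\sigma$-finite. The conclusion of Theorem \ref{thm:PL-extended} is exactly the desired $(g \circ f)(x_1 \plusp x_2, A \plusp B) \geq (g \circ f)(x_1, A)^p (g \circ f)(x_2, B)^{1-p}$.

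For (iii), given log-concave $f \colon X \kto W$ and $g \colon Y \kto Z$, I would first show that if three measures $\mu_i$ on $W$ and $\nu_i$ on $Z$ each satisfy the hypothesis \eqref{eq:three-measures-rule} on convex measurable sets, then so do the product measures $\mu_i \otimes \nu_i$ on $W \times Z$. The key geometric input is that for convex measurable $A', B' \subseteq W \times Z$ the slices $A'_w := \{ z : (w, z) \in A' \}$ are convex, and $(A' \plusp B')_{w_1 \plusp w_2} \supseteq A'_{w_1} \plusp B'_{w_2}$. Via Fubini, the measure of a convex set becomes the integral of its slice function $w \mapsto \nu_i(C'_w)$ against $\mu_i$; these slice functions are log-concave in $w$ (by the same "$A \plusp A = A$" argument), and the slice inclusion combined with the rule on $Z$ yields the required PL-type inequality between them. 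A single application of Theorem \ref{thm:PL-extended} on $W$ then lifts the three-measure rule to $W \times Z$, and log-concavity of $f \otimes g$ follows by instantiating this rule at $A' = E$, $B' = F$, $C' = E \plusp F$.

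The main obstacle I expect is careful measurability bookkeeping: one must verify that the auxiliary functions $y \mapsto g(y, A)$ and $w \mapsto \nu(C'_w)$ are measurable, that slices $C'_w$ of convex measurable sets lie in the relevant $\sigma$-algebra, and that the various Minkowski-sum sets appearing inside the arguments of $f$ or $g$ are measurable so that \eqref{eq:lc-channel} may be legitimately invoked. For standard spaces such as $\mathbb{R}^n$ with Borel structure these checks are straightforward on natural classes of convex sets; in full generality they may demand either an additional regularity hypothesis on the underlying convex spaces or an approximation argument to handle non-measurable Minkowski sums.
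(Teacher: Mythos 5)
Your proposal is correct and rests on exactly the same two pillars as the paper's proof: the observation that identities and coherence isomorphisms are Dirac channels of affine maps (hence log-concave), and the extended \PL{} inequality (Theorem \ref{thm:PL-extended}) applied with the three measures $f(x_1 \plusp x_2), f(x_1), f(x_2)$ and the three functions $g(-,\cdot)$ evaluated on the relevant convex sets. The only genuine difference is the decomposition. The paper proves the single combined statement that $(g \circ f) \otimes \id{W}$ is log-concave for every convex space $W$, which yields closure under both composition (take $W = I$) and tensoring in one pass; the slices $E^w$ of convex sets in $Z \times W$ and the inclusion $F^{w \plusp w'} \supseteq D^{w} \plusp E^{w'}$ play exactly the role your slice functions $w \mapsto \nu_i(C'_w)$ play in your step (iii). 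You instead treat composition and tensor separately, with the tensor handled by a product-measure stability lemma (itself proved by Fubini plus one more application of Theorem \ref{thm:PL-extended}); this is marginally longer but arguably cleaner, since the product-measure lemma is a reusable statement about measures satisfying \eqref{eq:three-measures-rule} rather than about channels. Your closing measurability caveats are well placed: in particular, the definition of a log-concave channel only asserts \eqref{eq:lc-channel} when $A \plusp B$ is itself measurable, whereas the hypothesis of Theorem \ref{thm:PL-extended} is phrased for any measurable $C \supseteq A \plusp B$; bridging this (by monotonicity when $A \plusp B$ is measurable, or by an inner-approximation argument otherwise) is a gap your write-up shares with the paper's own proof, so it does not count against you. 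No missing ideas; this is a faithful, slightly repackaged version of the paper's argument.
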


\begin{proof}
All identities and coherence isomorphisms are channels $X \to Y$ of the form $x \mapsto \delta_{g(x)}$ where $g$ is an affine measurable map, making them log-concave. Indeed, for any given $g$, this channel will be log-concave iff for all $A, B \in \Sigma_Y$, we have $g(x) \in A, g(y) \in B \implies g(x \plusp y) \in A \plusp B$. Taking $A=\{g(x)\}, B=\{g(y)\}$ shows this is equivalent to $g$ being affine. 

It now suffices to show for any log-concave $f \colon X \kto Y$, $g \colon Y \kto Z$ and any convex space $W$ that the channel $h := (g \circ f) \otimes \id{W} \colon X \otimes W \to Z \otimes W$ is log-concave. From the definition of $\ProbC$ one may see that for all $(x,w) \in X \times W$ and measurable $E \subseteq Z \times W$ we have 
\[
h((x,w),E) = \int_{y \in Y} g(y,E^w) df(x,y)
\]
where $E^w := \{z \in Z \mid (z,w) \in E\}$. Let $x,x' \in X$, $w,w' \in W$ and $p \in (0,1)$. By definition the measures  $f(x \plusp x'), f(f(x')$ are all finite and satisfy 
\[
f(x \plusp x',C) \geq f(x,A)^pf(x',B)^{1-p}
\] 
whenever $A, B, C \in \Sigma_{Y}$ are convex with $C \supseteq A \plusp B$. Now let $D, E \subseteq Z \times W$ be convex and measurable and suppose that $F := D \plusp E$ is measurable also. Then $D^w$, $E^{w'}$ will be convex also. Note that if $(x,w) \in D$ and $(x',w') \in E$ then $(x \plusp x', w \plusp w') \in F$. Hence $F^{w \plusp w'} \supseteq C^w \plusp D^{w'}$.  Since $g$ is log-concave, we conclude that for each $y, y' \in Y$ we have 
\begin{align*}
g(y\plusp y',C^{w \plusp w'}) &\geq g(y \plusp y',A^w \plusp B^{w'})\\
 &\geq 
g(y,A^w)^p g(y',B^{w'})^{1-p} 
\end{align*}
Noting also that $g(-,C^{w \plusp w'})$, $g(-,A^w)$ and $g(-,B^w)$ are all log-concave and hence quasi-concave functions, we may apply the final statement of Theorem \ref{thm:PL-extended} with
$\mu = f(x \plusp x'), \nu = f(x), \omega = f(x')$ to give 
\begin{align*}
h((x \plusp x', w \plusp w'),F)
&=
\int_{y \in Y}
g(y,F^{w \plusp w'})
df(x \plusp x',y)
\\ 
&\geq 
\left( 
\int_{y \in Y} 
g(y,D^w) df(x,y)
\right)^p 
\left( 
\int_{y \in Y} 
g(y,E^{w'}) df(x',y)
\right)^{1-p}
\\ 
&= 
h((x,w), D)^p 
h((x',w'),E)^{1-p}
\end{align*}
Hence $h$ is a log-concave channel as required. 
\end{proof}

\begin{remark} 
We could instead have defined log-concave channels $f$ to satisfy \eqref{eq:lc-channel} for arbitrary (not necessarily convex) $A, B \in \Sigma_Y $ with $A \plusp B$ measurable. Such `fully log-concave' channels form a monoidal subcategory of $\LogCon$, with the same proof as Theorem \ref{thm:LogCon-anSMC}. 
\end{remark}

We can also extend Theorem \ref{thm:fuzzy-concepts-canonical} to show that our definition of conceptual channel is not arbitrary, but that $\LogCon$ is `the largest' subcategory of $\ProbC$ whose effects can form fuzzy concepts. Let us call a convex space \emph{well-behaved} if for all convex measurable subsets $A, B$, the convex set 
\begin{equation} \label{eq:conv-set-exmpl}
\{(a \plusp b, p) \mid a \in A, b\in B, p\in [0,1]\} \subseteq X \times [0,1]
\end{equation} 
is measurable. We conjecture that every normed space, with its Borel $\sigma$-algebra, is well-behaved. 

\begin{remark} 
Note that well-behavedness would fail even in $\mathbb{R}$ if we did not require $A, B$ to be convex; there are (Borel) measurable sets $A, B \subseteq \mathbb{R}$ for which $A + B$ and hence \eqref{eq:conv-set-exmpl} are not measurable \cite{erdHos1970sum}.
\end{remark}

\begin{theorem}[Log-Concave Channels are Canonical] \label{thm:LogCon-Channels-Canonical} 
Let $\catC$ be a symmetric monoidal subcategory of $\ProbC$ containing only well-behaved convex spaces, as well as the space $[0,1]$, and for each object $X$ write $\catC(X) := \catC(X,I)$. Suppose that either of the following hold. 
\begin{enumerate} 
\item \label{enum:effects-fuzzy}
$\CC(X) \subseteq \catC(X) \subseteq \FCon(X)$ for all $X$; 
\item \label{enum:effects-interesting}
$\CC(X) \subseteq \catC(X) \subseteq \QCon(X)$ for all $X$ and  $\catC([0,1])$ either contains all affine functions or contains all exponential functions.
\end{enumerate} 
Then there are symmetric monoidal inclusions 
$\catC \hookrightarrow \LogCon \hookrightarrow \ProbC$.
\end{theorem}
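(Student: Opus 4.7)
The plan is to show every morphism of $\catC$ is a log-concave channel; the inclusion $\LogCon \hookrightarrow \ProbC$ holds by definition. I would first uniformly reduce both hypotheses to the situation of case \ref{enum:effects-fuzzy}. That case assumes $\catC(X) \subseteq \FCon(X)$ directly. In case \ref{enum:effects-interesting}, monoidality of $\catC$ forces $C \otimes D \in \catC(X \otimes Y)$ whenever $C \in \catC(X), D \in \catC(Y)$, so together with quasi-concavity and the affine or exponential hypothesis on $\catC([0,1])$, all three conditions of Theorem \ref{thm:fuzzy-concepts-canonical} hold for the family $\{\catC(X)\}_X$ (over objects $X$ of $\catC$), yielding $\catC(X) \subseteq \FCon(X)$ as well. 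In either case every effect of $\catC$ is a fuzzy concept, hence log-concave.

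Fix now $f \colon X \kto Y$ in $\catC$, a weight $p \in [0,1]$, and convex measurable $A, B \in \Sigma_Y$ with $A +_p B$ measurable; the target inequality is $f(x_1 +_p x_2,\, A +_p B) \geq f(x_1, A)^p f(x_2, B)^{1-p}$. The central device is the subset
\[
W := \{(a +_q b,\, q) \mid a \in A,\ b \in B,\ q \in [0,1]\} \;\subseteq\; Y \times [0,1],
\]
designed so that each slice $W_q := \{y : (y,q) \in W\}$ equals $A +_q B$. Well-behavedness of $Y$ is exactly what delivers measurability of $W$, while a short computation (expressing the first coordinate of a convex combination of two elements of $W$ in the form $q \cdot a + (1-q) \cdot b$ for $a \in A$, $b \in B$, using the convexity of $A$ and $B$) shows $W$ is convex. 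Hence $W \in \CC(Y \otimes [0,1])$, and via the identification of crisp concepts with their indicator effects, $1_W \in \catC(Y \otimes [0,1])$.

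Since $[0,1]$ lies in $\catC$, the channel $f \otimes \id{[0,1]} \colon X \otimes [0,1] \kto Y \otimes [0,1]$ belongs to $\catC$, so the composite effect $E := 1_W \circ (f \otimes \id{[0,1]})$ lies in $\catC(X \otimes [0,1])$. Unpacking this via the product-measure formula \eqref{eq:prod-meas-compn} gives $E(x, q) = f(x,\, A +_q B)$. By the reduction $E$ is log-concave, and evaluating its log-concavity at $(x_1, 1)$ and $(x_2, 0)$ with weight $p$, using $A +_1 B = A$ and $A +_0 B = B$, yields exactly the desired inequality. The main obstacle is the construction of $W$: both its measurability (where well-behavedness is indispensable) and its convexity are needed to turn $1_W$ into a crisp concept, and only then does it package all the Minkowski-sum inequalities $f(\cdot,\, A +_q B)$ into a single fuzzy-concept statement from which log-concavity of $f$ can be extracted by an endpoint evaluation.
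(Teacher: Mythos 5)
Your proposal is correct and follows essentially the same route as the paper's proof: reduce case (2) to case (1) via Theorem \ref{thm:fuzzy-concepts-canonical} using monoidality of $\catC$, then use the well-behavedness set \eqref{eq:conv-set-exmpl} as a crisp concept on $Y \otimes [0,1]$, pull it back along $f \otimes \id{[0,1]}$ to get an effect $E(x,q) = f(x, A \plusq B)$ in $\catC$, and extract the log-concavity inequality by evaluating at the endpoints $(x_1,1)$ and $(x_2,0)$. Your write-up is in fact slightly more careful than the paper's on the convexity of $W$ and on identifying the slices $W_q$.
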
 

\begin{proof}
Since $\catC$ is a monoidal subcategory of $\ProbC$, all effects in $\catC$ are measurable and condition \eqref{enum:comp} of Theorem \ref{thm:fuzzy-concepts-canonical} holds. Hence by Theorem \ref{thm:fuzzy-concepts-canonical} we have \eqref{enum:effects-interesting} $\implies$ \eqref{enum:effects-fuzzy}. We now show that \eqref{enum:effects-fuzzy} ensures that every $f \colon X \kto Y$ in $\catC$ is log-concave. Let $A, B \in \Sigma_Y$ be convex. Then defining $C$ to be the set \eqref{eq:conv-set-exmpl}, by assumption $1_C$ is an effect on $Y \times [0,1]$ in $\catC$. Hence the effect $D = 1_C \circ (f \otimes \id{})$ on $X \times [0,1]$ belongs to $\catC$ also, and must be log-concave. Thus for any $x,y\in X$ 
\[
f(x \plusp y,A \plusp B) 
= D(x,\plusp y, p) 
\geq 
D(x,1)^pD(y,0)^{1-p} 
=f(x,A)^pf(y,B)^{1-p}
\]
making $f$ log-concave.
\end{proof}

\begin{remark} 
This proof shows that log-concave channels satisfy an analogue of `complete positivity'. If a channel $f$ is such that each channel $f \otimes \id{X}$ preserves fuzzy concepts under post-composition for all objects $X$ (or even just $X=[0,1]$), then $f$ must be log-concave. 
\end{remark}

\section{Examples of Log-Concave Channels} \label{sec:log-chan-examples}

To make sense of the definition of log-concave channel and illustrate working in the category $\LogCon$ we now give numerous examples of its morphisms. We use the \emph{graphical calculus} for symmetric monoidal categories, in which morphisms $A \to B$ are boxes with lower input wire $A$ and upper output wire $B$ (read bottom to top), with $\id{I}$ corresponding to the empty diagram \cite{selinger2010survey}.  

\begin{enumerate}[leftmargin=0.5cm]
\item \textbf{Effects.} Scalars $I \kto I$ are values $p \in [0,1]$, and fuzzy concepts on $X$ correspond precisely to effects 
\[
\scalebox{1.0}{\begin{tikzpicture}
	\begin{pgfonlayer}{nodelayer}
		\node [style=map] (0) at (0, 0.75) {$C$};
		\node [style=none] (1) at (0, -0.25) {};
		\node [style=label] (2) at (0, -0.75) {$X$};
	\end{pgfonlayer}
	\begin{pgfonlayer}{edgelayer}
		\draw (1.center) to (0);
	\end{pgfonlayer}
\end{tikzpicture}
} 
\] 

\item \textbf{States.} A state 
\[
\scalebox{1.0}{\begin{tikzpicture}
	\begin{pgfonlayer}{nodelayer}
		\node [style=map] (0) at (0, -0.25) {$\omega$};
		\node [style=none] (1) at (0, 0.75) {};
		\node [style=label] (2) at (0, 1) {$X$};
	\end{pgfonlayer}
	\begin{pgfonlayer}{edgelayer}
		\draw (1.center) to (0);
	\end{pgfonlayer}
\end{tikzpicture}
} 
\]
is a sub-probability measure $\omega$ on $X$ satisfying 
\begin{equation} \label{eq:subprob-lc} 
\omega(A \plusp B) \geq \omega(A)^p\omega(B)^{1-p}
\end{equation} 
for all convex $A, B \in \Sigma_X$ for which $A \plusp B \in \Sigma_X$. Measures for which this holds for \emph{arbitrary} $A, B \in \Sigma_X$ are called \emph{log-concave measures}, and are well-studied with log-concave functions \cite{klartag2005geometry}. Thus states on $X$ are essentially log-concave sub-probability measures. 

Given a fuzzy concept $C$ on $X$, the scalar 
\[
\scalebox{1.0}{\begin{tikzpicture}
	\begin{pgfonlayer}{nodelayer}
		\node [style=map] (0) at (0, -0.75) {$\omega$};
		\node [style=map] (1) at (0, 1.25) {$C$};
		\node [style=label] (2) at (-0.5, 0.25) {$X$};
	\end{pgfonlayer}
	\begin{pgfonlayer}{edgelayer}
		\draw (1) to (0);
	\end{pgfonlayer}
\end{tikzpicture}
} = \int_{x \in X} C(x) d\omega(x) \in [0,1]
\]
is the extent to which the concept $C$ is deemed to hold over the `distribution of inputs' $\omega$.

\item \textbf{States from densities.}
When $X$ is (a convex measurable subset of) $\mathbb{R}^n$, it is well-known that log-concave measures $\omega$ correspond precisely to log-concave \emph{densities}, as follows. If $\rho \colon X \to \Rplus$ is a measurable log-concave function we may define a log-concave measure $\omega := \rho\lambda^X$ on $X$ by 
\begin{equation} \label{eq:density}
\omega(A) = \int_A \rho d\lambda^X
\end{equation} 
for each $A \in \Sigma_X$, where $\lambda^X$ is the Lebesgue measure on $X$. Conversely, every log-concave measure on $X$ is of the form  $\omega = i \circ \omega'$ where $Y$ is a measurable convex subset, $\omega'=\rho\lambda^Y$ for a log-concave density $\rho$ on $Y$, and $i$ is the inclusion $Y \hookrightarrow X$. Specifically $Y$ is the affine closure of $\omega$'s \emph{support}. 

Many standard probability distributions on $\mathbb{R}^n$ form states in $\LogCon$, including:

\begin{enumerate} 
\item Each point measure $\delta_x$;
\item Each uniform distribution over a compact convex compact subset $C$, with density $1_C(x)$ and measure $\omega(A) = \frac{\lambda(A \cap C)}{\lambda(C)}$ where $\lambda$ is the Lebesgue measure; 
\item 
Each multivariate Gaussian distribution, which (on its affine support) has log-concave density 
\begin{equation} \label{eq:Gaussian} 
\rho(x) = \frac{1}{\kappa} e^{-\frac{1}{2}(x - \mu)^{\mathsf{T}}\Sigma^{-1}(x -\mu)}
\end{equation}  
with mean $\mu \in X=\mathbb{R}^n$, covariance matrix $\Sigma$ and normalisation $\kappa = \sqrt{((2\pi)^n\det(\Sigma)}$; 
\item 
The logistic, extreme value, Laplace and chi distributions on $\mathbb{R}$, all with log-concave densities. 
\end{enumerate}

\item \textbf{Markov category maps.} Each convex space $X$ comes with log-concave \emph{copying} and \emph{discarding} channels which form a commutative comonoid
 \[
\scalebox{1.0}{\begin{tikzpicture}
	\begin{pgfonlayer}{nodelayer}
		\node [style=whitedot] (0) at (0, -0.25) {};
		\node [style=none] (1) at (0, -1) {};
		\node [style=none] (2) at (-0.75, 0.5) {};
		\node [style=none] (5) at (0.75, 0.5) {};
		\node [style=none] (6) at (4.5, 0) {};
		\node [style=none] (7) at (4.5, -1) {};
		\node [style=upground] (8) at (4.5, 0.25) {};
	\end{pgfonlayer}
	\begin{pgfonlayer}{edgelayer}
		\draw (1.center) to (0);
		\draw [bend left=45] (0) to (2.center);
		\draw [bend right=45] (0) to (5.center);
		\draw (7.center) to (6.center);
	\end{pgfonlayer}
\end{tikzpicture}}
 \]
and are defined by $x \mapsto \delta_{(x,x)}$ and $x \mapsto 1$ for all $x$, respectively. 

 These makes $\LogCon$ a \emph{copy/discard category} \cite{cho2019disintegration}, and hence its subcategory of discard-preserving maps a \emph{Markov category} \cite{fritz2020representable}. The presence of discarding tells us that \emph{marginals} of log-concave channels are again log-concave, which is well-known for log-concave measures.

\item \textbf{Conceptual updates.} \label{enum:concept-updates} 
Copying lets us turn any fuzzy concept $C$ into an `update by $C$' map (left-hand below), as well as point-wise multiply any pair of fuzzy concepts $C, D$.
\begin{equation} \label{eq:concept-update}  
\scalebox{1.0}{\begin{tikzpicture}
	\begin{pgfonlayer}{nodelayer}
		\node [style=whitedot] (0) at (0, -0.5) {};
		\node [style=none] (1) at (0, -1.5) {};
		\node [style=none] (2) at (0.75, 0.5) {};
		\node [style=none] (5) at (-0.75, 0.5) {};
		\node [style=map] (7) at (-0.75, 0.5) {$C$};
		\node [style=none] (8) at (0.75, 1) {};
		\node [style=none] (9) at (-0.75, 0.5) {};
	\end{pgfonlayer}
	\begin{pgfonlayer}{edgelayer}
		\draw (1.center) to (0);
		\draw [bend right=45] (0) to (2.center);
		\draw [bend left=45] (0) to (5.center);
		\draw (9.center) to (7);
		\draw (8.center) to (2.center);
	\end{pgfonlayer}
\end{tikzpicture}} :: x \mapsto C(x) \delta_x \quad \qquad \qquad \scalebox{1.0}{\begin{tikzpicture}
	\begin{pgfonlayer}{nodelayer}
		\node [style=whitedot] (0) at (0, -0.5) {};
		\node [style=none] (1) at (0, -1.5) {};
		\node [style=none] (2) at (-0.75, 0.5) {};
		\node [style=none] (5) at (0.75, 0.5) {};
		\node [style=map] (7) at (-0.75, 0.5) {$C$};
		\node [style=none] (8) at (-0.75, 0.5) {};
		\node [style=map] (9) at (0.75, 0.5) {$D$};
	\end{pgfonlayer}
	\begin{pgfonlayer}{edgelayer}
		\draw (1.center) to (0);
		\draw [bend left=45] (0) to (2.center);
		\draw [bend right=45] (0) to (5.center);
		\draw (8.center) to (2.center);
	\end{pgfonlayer}
\end{tikzpicture}} :: x \mapsto C(x)D(x)
\end{equation}

\item \textbf{Affine maps.} Any \emph{partial affine map} $f \colon X \kto Y$, meaning a convex measurable subset $\dom(f) \subseteq X$ and measurable affine map $f \colon \dom(f)\to Y$, induces a log-concave channel $\hat f \colon X \kto Y$ with
\[
\hat f(x) = \begin{cases} \delta_{f(x)} & x \in \dom(f) \\ 0 & \text{otherwise} \end{cases} 
\]
We can characterise these channels as follows.
\begin{lemma} \label{lem:pConv}
A channel $f \colon X \to Y$ is of the form $\hat g$ for a partial affine map $g \colon X \to Y$ iff it is log-concave and \emph{crisp} in the sense that each $f(x)$ is either zero or a point measure. 
\end{lemma}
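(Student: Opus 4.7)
The forward direction is essentially by construction: for a partial affine map $g$, each $\hat g(x)$ is either $0$ or a point measure, so $\hat g$ is crisp, and its log-concavity follows from affinity of $g$ together with convexity of $\dom(g)$ (and has already been asserted at the point $\hat g$ was introduced).

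For the converse, suppose $f$ is log-concave and crisp. The natural definition is $\dom(g) := \{x \in X \mid f(x) \neq 0\}$ and, for each $x \in \dom(g)$, let $g(x) \in Y$ be the unique point with $f(x) = \delta_{g(x)}$. The proof then splits into a measurability step and an affinity step. For measurability, crispness forces $f(-,M) \colon X \to [0,1]$ to take values only in $\{0,1\}$ for every measurable $M \subseteq Y$: on $\dom(g)$ it agrees with the indicator of $g(x) \in M$, and off $\dom(g)$ it is zero. Hence $\dom(g) = f(-,Y)^{-1}(\{1\})$ is measurable, and for each measurable $M \subseteq Y$, $g^{-1}(M) = f(-,M)^{-1}(\{1\})$ is measurable in $X$, so $g$ is a measurable map on a measurable domain.

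For affinity, let $x,y \in \dom(g)$ and $p \in [0,1]$. Taking $A = \{g(x)\}$, $B = \{g(y)\}$ — singletons, hence convex and measurable in the paper's motivating normed-space examples, with $A \plusp B = \{g(x) \plusp g(y)\}$ again a singleton and therefore measurable — log-concavity of $f$ gives
\[
f(x \plusp y, \{g(x) \plusp g(y)\}) \geq f(x, \{g(x)\})^p f(y, \{g(y)\})^{1-p} = 1.
\]
Crispness forces $f(x \plusp y)$ to be either zero or a point measure, and since it must assign full mass to the singleton $\{g(x) \plusp g(y)\}$ it is $\delta_{g(x) \plusp g(y)}$. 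Thus $x \plusp y \in \dom(g)$ and $g(x \plusp y) = g(x) \plusp g(y)$, giving convexity of $\dom(g)$ and affinity of $g$. The only potential obstacle is working in a setting where singletons of $Y$ are not measurable; there one replaces $A,B$ by convex measurable neighbourhoods of $g(x),g(y)$ with measurable Minkowski combination and intersects, with the conclusion surviving whenever such sets separate points of $Y$.
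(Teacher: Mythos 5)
Your proof is correct and follows essentially the same route as the paper's: define $g$ on $\dom(g) = f(-,Y)^{-1}(1)$, get measurability from $f(-,M)^{-1}(1) = \dom(g) \cap g^{-1}(M)$, and derive affinity from log-concavity applied to the singletons $A = \{g(x)\}$, $B = \{g(y)\}$ (the paper delegates this last step to the opening of the proof of Theorem \ref{thm:LogCon-anSMC}, which uses exactly this singleton argument). Your explicit caveat about singleton measurability is a point the paper silently assumes rather than addresses, so flagging it is a small improvement rather than a divergence.
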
 
\begin{proof} 
Any crisp $f \colon X \kto Y$ has $f = \hat g$ for the partial function $g \colon X \relto Y$ with $f(x)=\delta_{g(x)}$ whenever $f(x)$ is defined. Then $\dom(g) = f(-,Y)^{-1}(1)$ is measurable, and for each $M \in \Sigma_Y$, so is the set $f(-,M)^{-1}(1) = \dom(g) \cap g^{-1}(M)$. Hence $g$ is a measurable function on a measurable domain. Log-concavity makes $\dom(g)$ convex and is then equivalent to $g$ being affine, just as at the start of the proof of Theorem \ref{thm:LogCon-anSMC}.
\end{proof} 

Note that all these crisp maps are \emph{deterministic}, in the sense of Markov categories \cite{fritz2020synthetic}, meaning that they satisfy
\[
\scalebox{1.0}{\begin{tikzpicture}
	\begin{pgfonlayer}{nodelayer}
		\node [style=whitedot] (0) at (0, -0.5) {};
		\node [style=none] (1) at (0, -1.5) {};
		\node [style=none] (2) at (-0.75, 0.5) {};
		\node [style=none] (5) at (0.75, 0.5) {};
		\node [style=map] (6) at (-0.75, 0.5) {$f$};
		\node [style=map] (7) at (0.75, 0.5) {$f$};
		\node [style=none] (8) at (-0.75, 1.5) {};
		\node [style=none] (9) at (0.75, 1.5) {};
		\node [style=none] (10) at (2.75, 0) {$=$};
		\node [style=whitedot] (11) at (5.25, 0.5) {};
		\node [style=none] (12) at (5.25, -1.5) {};
		\node [style=none] (13) at (4.5, 1.5) {};
		\node [style=none] (14) at (6, 1.5) {};
		\node [style=none] (15) at (4.5, 1.5) {};
		\node [style=none] (16) at (6, 1.5) {};
		\node [style=none] (17) at (4.5, 1.5) {};
		\node [style=none] (18) at (6, 1.5) {};
		\node [style=map] (19) at (5.25, -0.5) {$f$};
	\end{pgfonlayer}
	\begin{pgfonlayer}{edgelayer}
		\draw (1.center) to (0);
		\draw [bend left=45] (0) to (2.center);
		\draw [bend right=45] (0) to (5.center);
		\draw (8.center) to (6);
		\draw (9.center) to (7);
		\draw (12.center) to (11);
		\draw [bend left=45] (11) to (13.center);
		\draw [bend right=45] (11) to (14.center);
		\draw (17.center) to (15.center);
		\draw (18.center) to (16.center);
	\end{pgfonlayer}
\end{tikzpicture}}
\]

\item \textbf{Convolutions.} For any pair of log-concave channels $f, g \colon X \kto Y$ between vector spaces we may define their \emph{convolution} $f \star g$ as the log-concave channel 
\begin{equation} \label{eq:convolution} 
\scalebox{1.0}{\begin{tikzpicture}
	\begin{pgfonlayer}{nodelayer}
		\node [style=whitedot] (0) at (0, 0.25) {$ \! + \!$};
		\node [style=none] (1) at (0, 1) {};
		\node [style=none] (2) at (-0.75, -1) {};
		\node [style=none] (5) at (0.75, -1) {};
		\node [style=map] (6) at (-0.75, -1) {$f$};
		\node [style=map] (7) at (0.75, -1) {$g$};
		\node [style=whitedot] (8) at (0, -2.25) {};
		\node [style=none] (9) at (0, -3) {};
		\node [style=none] (10) at (-0.75, -1) {};
		\node [style=none] (11) at (0.75, -1) {};
	\end{pgfonlayer}
	\begin{pgfonlayer}{edgelayer}
		\draw (1.center) to (0);
		\draw [bend right=45] (0) to (2.center);
		\draw [bend left=45] (0) to (5.center);
		\draw (9.center) to (8);
		\draw [bend left=45] (8) to (10.center);
		\draw [bend right=45] (8) to (11.center);
	\end{pgfonlayer}
\end{tikzpicture}} 
\end{equation}
where $+$ is the monoid $(x,y) \mapsto x + y$. When interpreting $f$ and $g$ as sending each $x \in X$ to random variables over $Y$, $f \star g$ sends each element to the sum of these random variables. 

\item \textbf{Noisy maps.}
As a case of the previous example, given any (measurable) partial affine map $f \colon X \to Y$, now viewed as a channel, and any state $\nu$ of $Y$, we can form a log-concave channel 
\begin{equation} \label{eq:noise}  
\scalebox{1.0}{\begin{tikzpicture}
	\begin{pgfonlayer}{nodelayer}
		\node [style=whitedot] (0) at (0, 0.25) {$ \! + \!$};
		\node [style=none] (1) at (0, 1) {};
		\node [style=none] (2) at (-0.75, -1) {};
		\node [style=none] (5) at (0.75, -1) {};
		\node [style=map] (6) at (-0.75, -1) {$f$};
		\node [style=map] (7) at (0.75, -1) {$\nu$};
		\node [style=none] (9) at (-0.75, -2) {};
		\node [style=none] (10) at (-0.75, -1) {};
	\end{pgfonlayer}
	\begin{pgfonlayer}{edgelayer}
		\draw (1.center) to (0);
		\draw [bend right=45] (0) to (2.center);
		\draw [bend left=45] (0) to (5.center);
		\draw (9.center) to (10.center);
	\end{pgfonlayer}
\end{tikzpicture}}
\end{equation}
which sends $(x,A) \mapsto \nu(A - f(x))$. If $\nu$ models `random noise' over the space $Y$, then this channel describes a random variable $y = fx + \nu$ in terms of input $x \in X$.  Considering spaces $\mathbb{R}^n$ and maps \eqref{eq:noise} where $f$ is linear and $\nu$ is a Gaussian (noise) probability measure yields the symmetric monoidal category $\Gauss$ of Gaussian probability theory from  \cite{fritz2020synthetic}. Thus $\Gauss \hookrightarrow \LogCon$. 

\item \textbf{Channels from densities.} Let $X, Y$ be convex measurable subsets of $\mathbb{R}^n, \mathbb{R}^m$ respectively, and $\rho \colon X \times Y \to \Rplus$ be a measurable log-concave function  such that $\int_Y\rho(x,y)dy \leq 1$ for each $x \in X$, where $dy$ denotes the Lebesgue measure on $Y$. Then we may define a log-concave channel by 
\begin{equation} \label{eq:density-channel}
f(x,A) = \int_{A} \rho(x,y) dy
\end{equation} 
for each $A \in \Sigma_Y$. This follows from the usual \PL{} inequality in $\mathbb{R}^n$  (Lemma \ref{lem:PL}).  It would be interesting to find a converse to this result, analogous to that for states. 
\end{enumerate} 

\section{Toy Application: Reasoning in Food Space} \label{sec:toy-example}

In closing we demonstrate a toy example of conceptual reasoning in $\LogCon$, returning to our example of `food space' $F = C \otimes T$ from Example \ref{ex:conceptual-spaces} \eqref{enum:food-space-initial}, based on \cite{bolt2019interacting}. As in that example, let us first define a crisp concept $\text{Green} = B^{\epsilon}_g$ of radius $\epsilon=0.1$ in $C$ around pure green $g=(0,1,0)$. We can extend this to a crisp concept on the whole of $F$ via
\[
\scalebox{1.0}{\begin{tikzpicture}
	\begin{pgfonlayer}{nodelayer}
		\node [style=map] (0) at (-0.75, 0.5) {Green};
		\node [style=none] (1) at (-0.75, -0.5) {};
		\node [style=label] (2) at (-0.75, -1) {$F$};
		\node [style=none] (3) at (1, 0) {$:=$};
		\node [style=map] (4) at (3, 0.5) {Green};
		\node [style=none] (5) at (3, -0.5) {};
		\node [style=label] (6) at (3, -1) {$C$};
		\node [style=none] (7) at (4.75, -0.5) {};
		\node [style=upground] (8) at (4.75, 0.25) {};
		\node [style=label] (9) at (4.75, -1) {$T$};
	\end{pgfonlayer}
	\begin{pgfonlayer}{edgelayer}
		\draw (0) to (1.center);
		\draw (4) to (5.center);
		\draw (8) to (7.center);
	\end{pgfonlayer}
\end{tikzpicture}}
\] 
In the same way we define crisp concepts `Yellow', `Sweet' and `Bitter' on $F$.

Now suppose an agent wishes to learn the concept of `banana' from a set of exemplars in $F$ containing a banana they conceptualise as yellow and sweet, as well as another they deem to be green and bitter. They form a crisp concept $B$ by taking the convex closure of these concepts
\begin{equation} \label{eq:banana-learn}
\scalebox{1.0}{\begin{tikzpicture}
	\begin{pgfonlayer}{nodelayer}
		\node [style=map] (1) at (-2.25, 0.5) {Banana};
		\node [style=none] (2) at (-2.25, -0.5) {};
		\node [style=label] (3) at (-2.25, -1) {$F$};
		\node [style=none] (4) at (-0.25, 0) {$=$};
		\node [style=map] (5) at (1.8, 0.5) {Yellow};
		\node [style=none] (6) at (1.8, -0.5) {};
		\node [style=label] (7) at (1.8, -1) {$C$};
		\node [style=map] (8) at (4.25, 0.5) {Sweet};
		\node [style=none] (9) at (4.25, -0.5) {};
		\node [style=label] (10) at (4.25, -1) {$T$};
		\node [style=none] (11) at (6.25, 0) {$\bigvee$};
		\node [style=map] (12) at (8.25, 0.5) {Green};
		\node [style=none] (13) at (8.25, -0.5) {};
		\node [style=label] (14) at (8.25, -1) {$C$};
		\node [style=map] (15) at (10.5, 0.5) {Bitter};
		\node [style=none] (16) at (10.5, -0.5) {};
		\node [style=label] (17) at (10.5, -1) {$T$};
	\end{pgfonlayer}
	\begin{pgfonlayer}{edgelayer}
		\draw (1) to (2.center);
		\draw (5) to (6.center);
		\draw (8) to (9.center);
		\draw (12) to (13.center);
		\draw (15) to (16.center);
	\end{pgfonlayer}
\end{tikzpicture}}
\end{equation} 
where $C \vee D = \overline{C \cup D}$ is the convex closure, the join in the partial inclusion order on crisp concepts of $F$.

\paragraph{Fuzzifying concepts} 
Since they are uncertain about the definition of their new concept `banana', the agent may wish to replace their concept with a fuzzy one. They can convert all of their crisp concepts into fuzzy ones using the `Gaussian fuzzification' of Example \ref{examples:fuzzy-concepts} \eqref{enum:Gaussian-fuzzification}. For example, we define a fuzzification `banana' of the crisp concept `Banana' with variance $\sigma_{Ba}^2$ as 
\[
\scalebox{1.0}{\begin{tikzpicture}
	\begin{pgfonlayer}{nodelayer}
		\node [style=map] (0) at (0.5, 0.5) {banana};
		\node [style=none] (1) at (0.5, -0.5) {};
		\node [style=label] (2) at (0.5, -1) {$F$};
	\end{pgfonlayer}
	\begin{pgfonlayer}{edgelayer}
		\draw (0) to (1.center);
	\end{pgfonlayer}
\end{tikzpicture}
} := N^{\text{Banana}}_{\sigma_{Ba}}  :: 
(c,t) \mapsto e^{-\frac{1}{2\sigma^2}d_H((c,t),\text{Banana})^2} 
\]
We define fuzzy concepts `green', `yellow', `bitter' and `sweet' via $\sigma_G$, $\sigma_{Y}, \sigma_{Bi}, \sigma_{S}, \sigma_{Ba}$ similarly.

\paragraph{Combining fuzzy concepts} We can combine any of our fuzzy concepts using the copying maps, as in \eqref{eq:concept-update}. For example, we can define a fuzzy concept `green banana' as  
\[
\scalebox{1.0}{\begin{tikzpicture}
	\begin{pgfonlayer}{nodelayer}
		\node [style=whitedot] (0) at (2.25, -0.25) {};
		\node [style=none] (1) at (2.25, -1.25) {};
		\node [style=none] (2) at (1, 0.75) {};
		\node [style=none] (5) at (3.5, 0.75) {};
		\node [style=map] (7) at (1, 0.75) {green};
		\node [style=map] (9) at (3.5, 0.75) {banana};
		\node [style=map] (10) at (-3.75, 0.5) {green banana};
		\node [style=none] (11) at (-3.75, -1.25) {};
		\node [style=none] (12) at (-1, -0.25) {$=$};
		\node [style=label] (13) at (-3.75, -1.75) {$F$};
		\node [style=label] (14) at (2.25, -1.75) {$F$};
	\end{pgfonlayer}
	\begin{pgfonlayer}{edgelayer}
		\draw (1.center) to (0);
		\draw [bend left=45] (0) to (2.center);
		\draw [bend right=45] (0) to (5.center);
		\draw (11.center) to (10);
	\end{pgfonlayer}
\end{tikzpicture}} :: x \mapsto \text{green}(x) \text{banana}(x)
\] 
In Figure 2 we plot some examples of composite fuzzy concepts on the food space $F$. 

\begin{figure} \label{figure:concepts} 
\begin{center}
\includegraphics[scale=0.35]{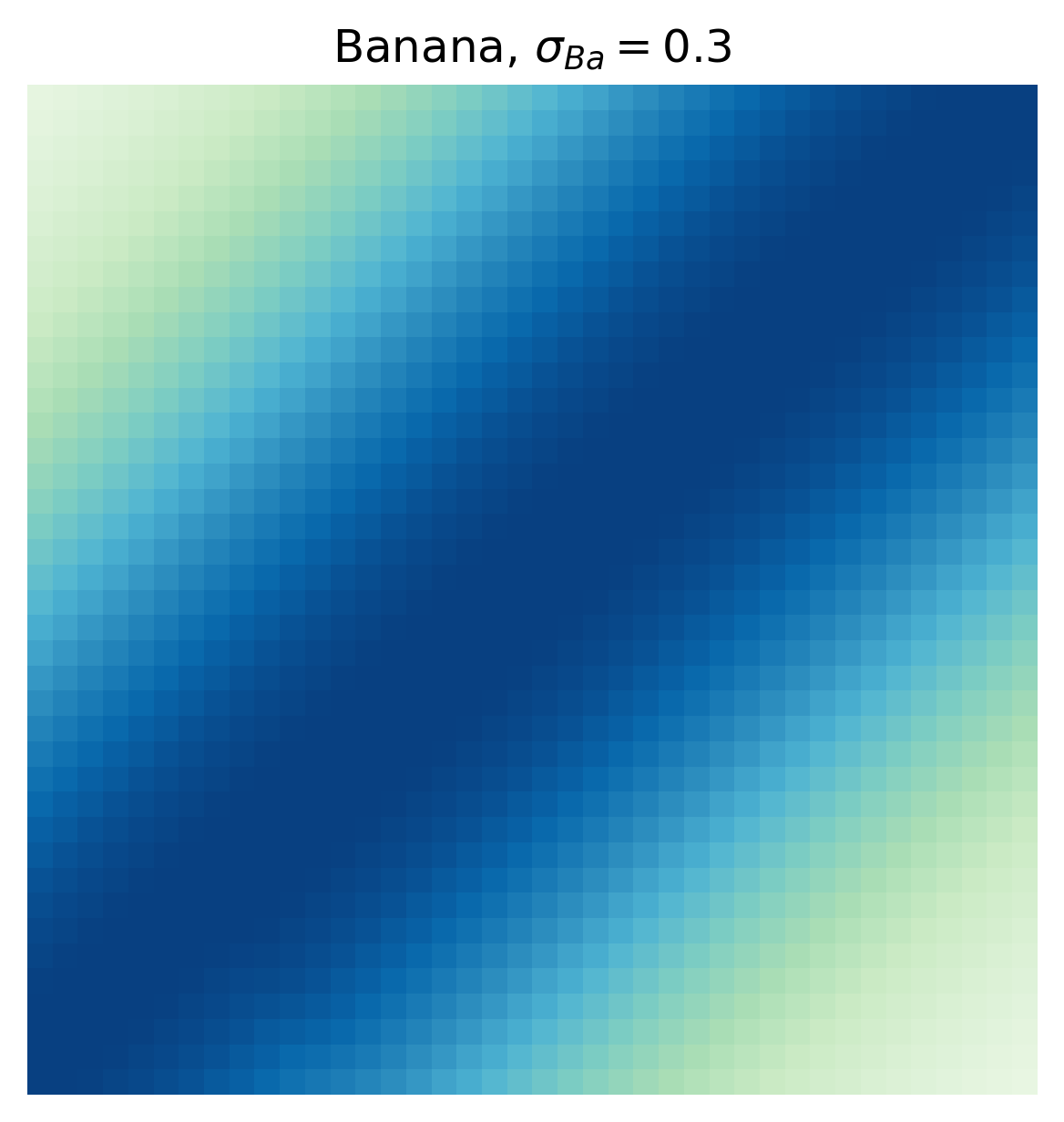}
\includegraphics[scale=0.35]{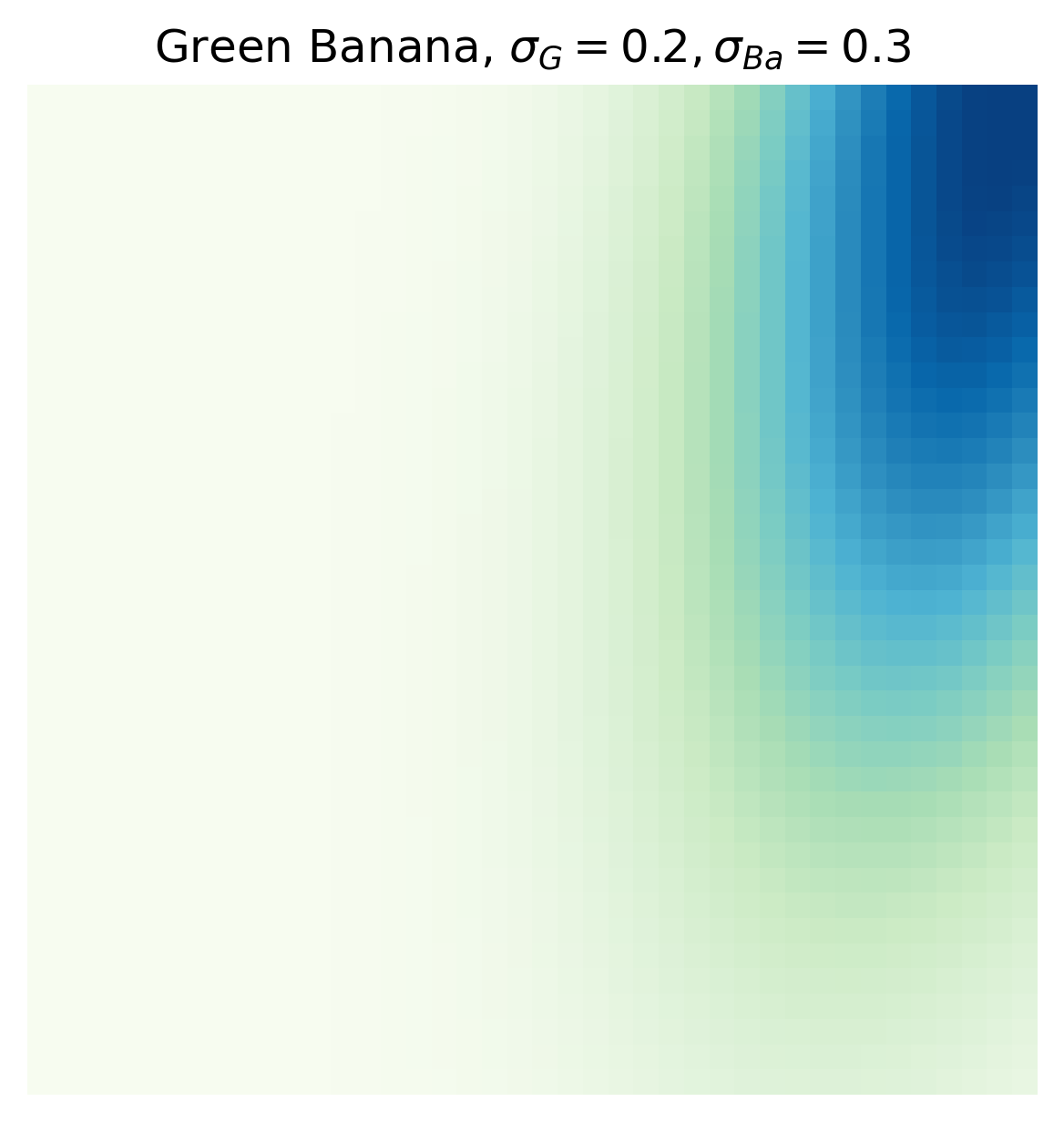}
\includegraphics[scale=0.35]{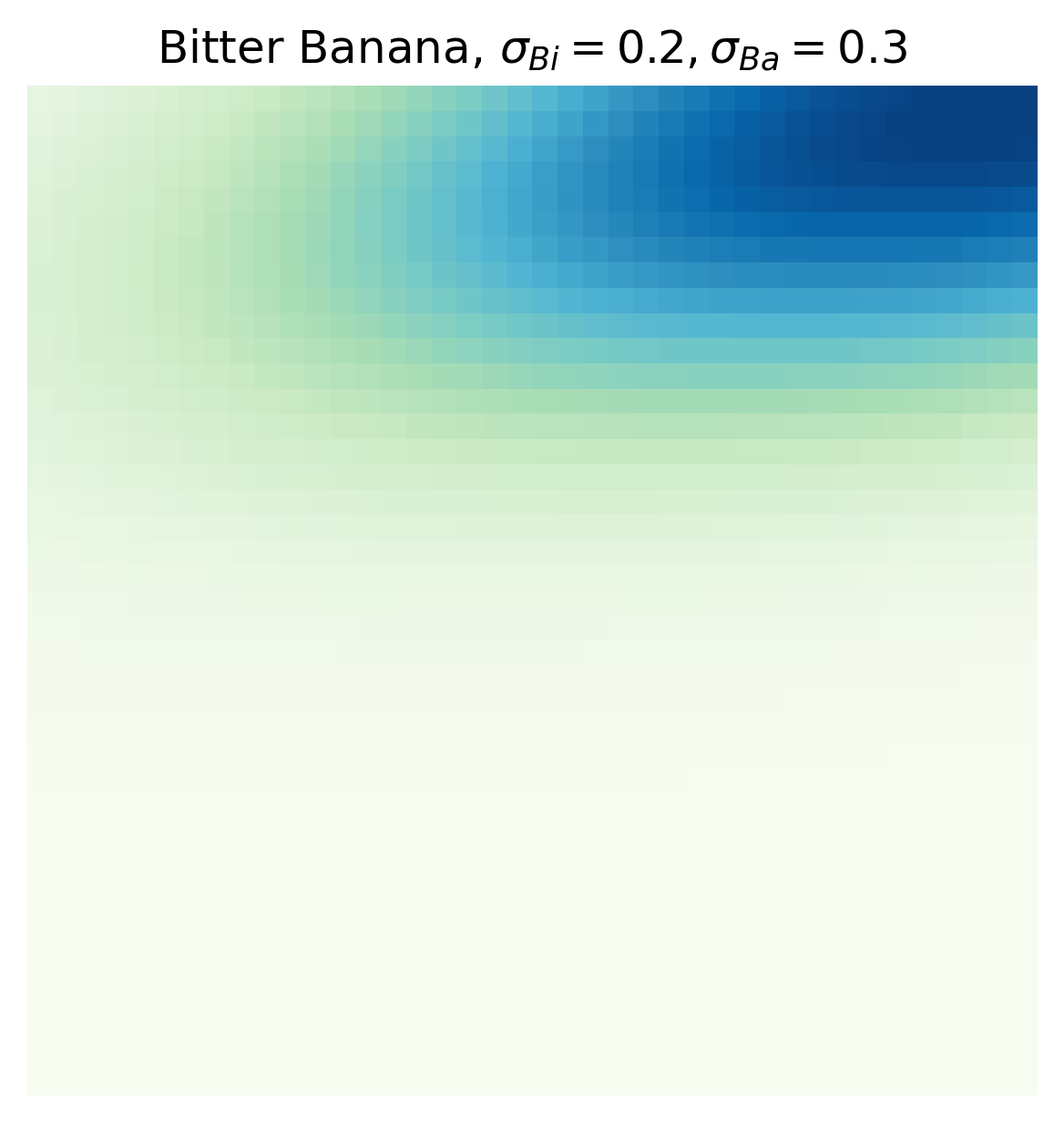}
\end{center}
\begin{center}
\includegraphics[scale=0.35]{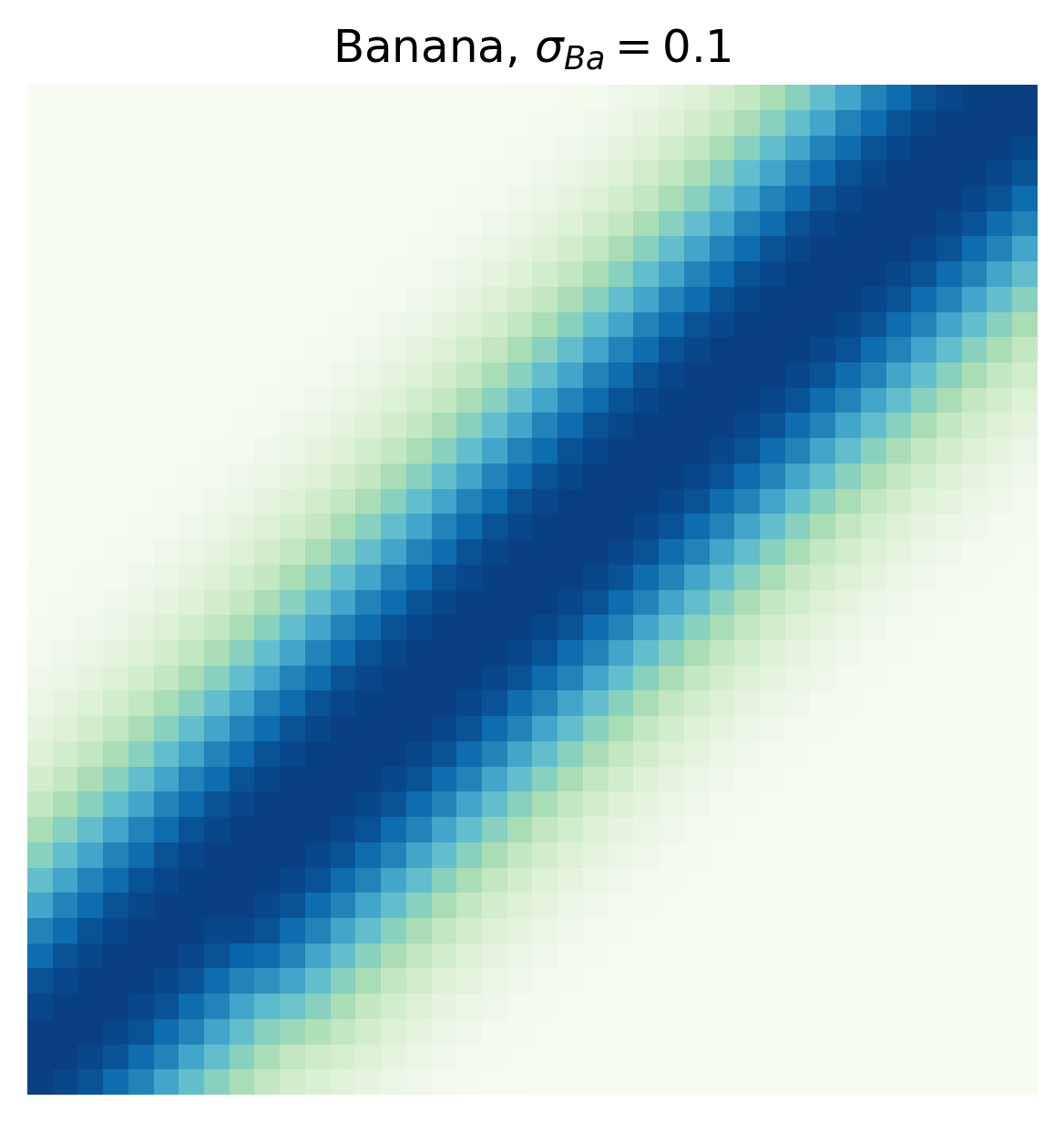}
\includegraphics[scale=0.35]{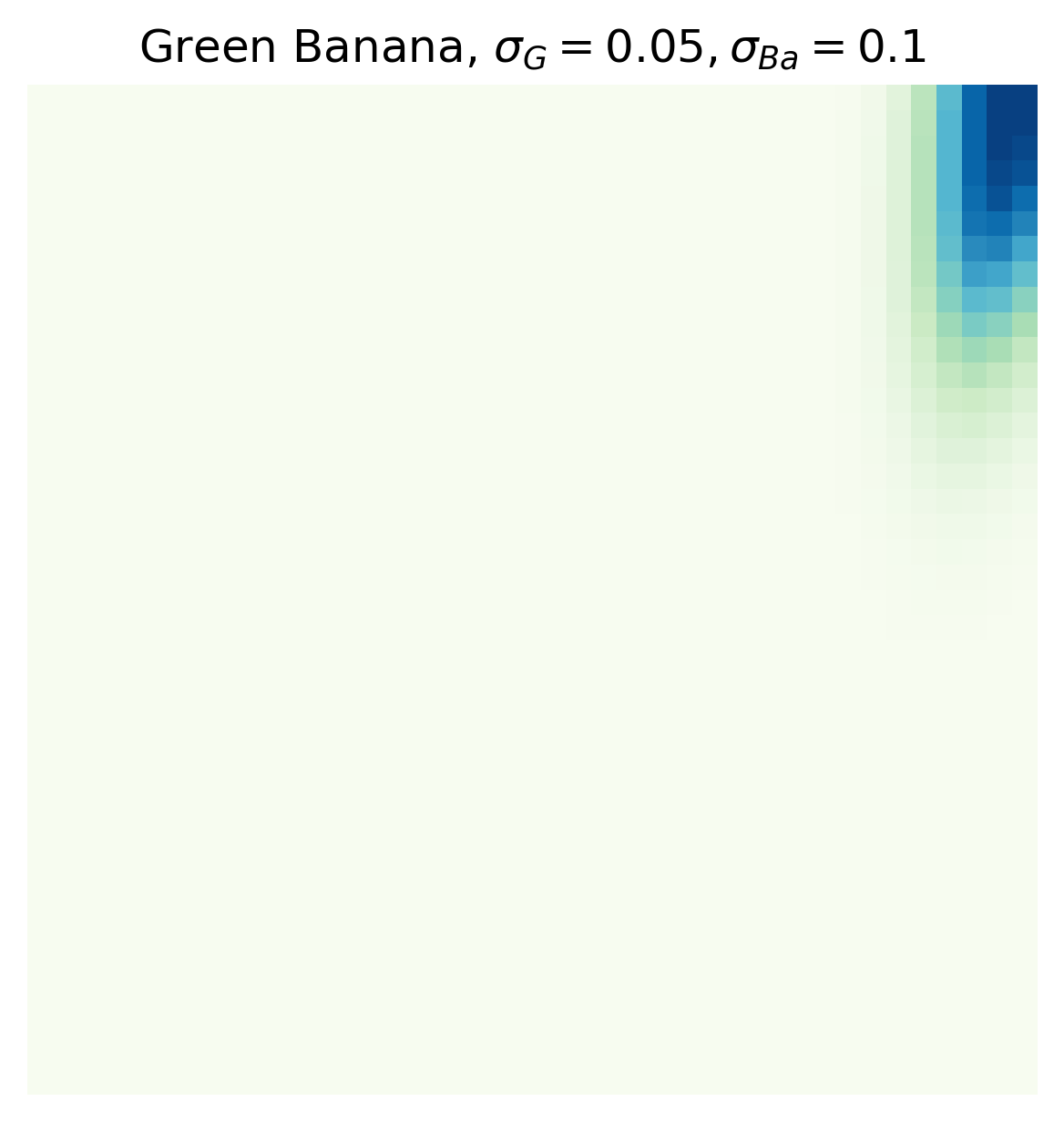}
\includegraphics[scale=0.35]{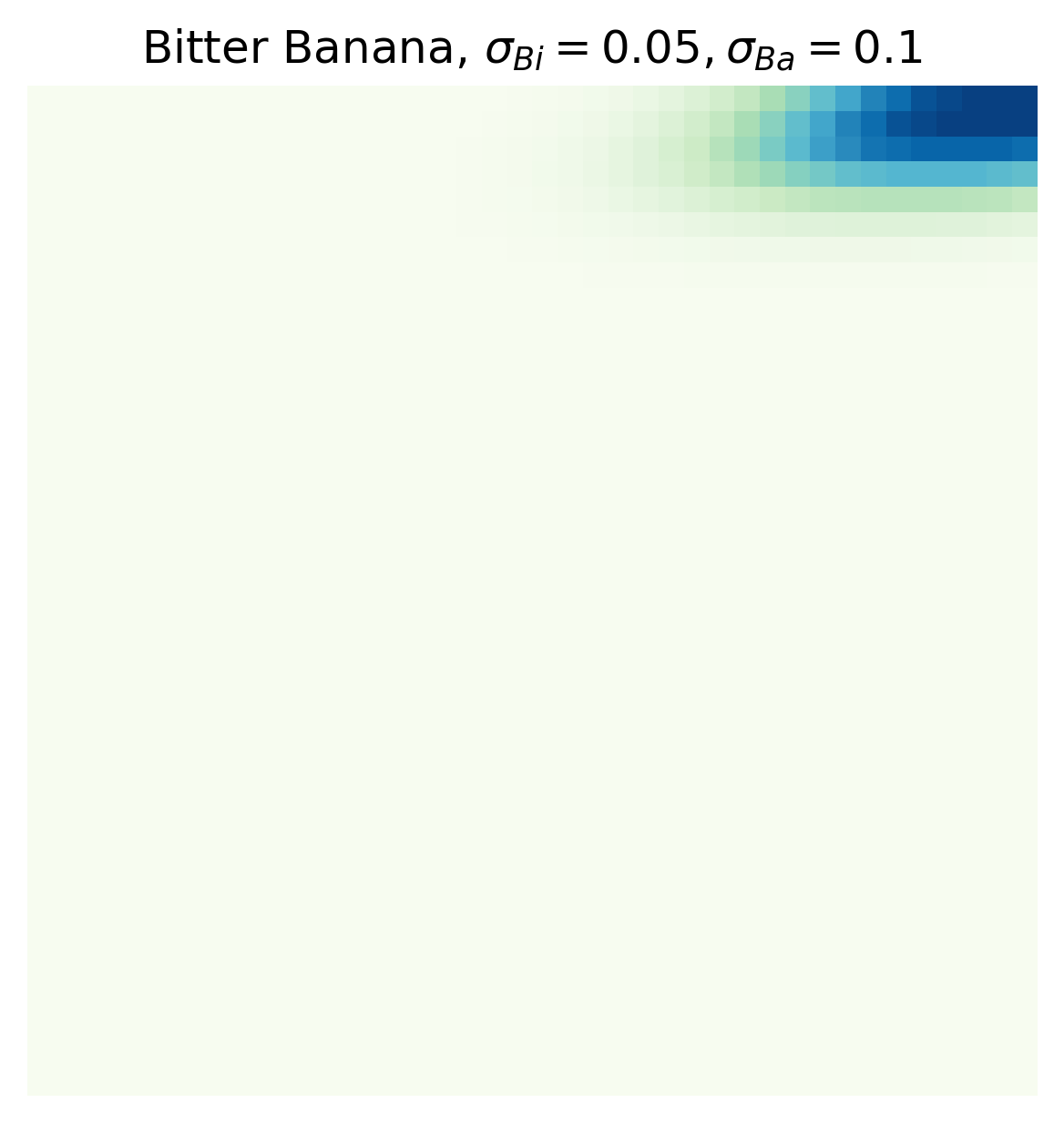}
\end{center}
\begin{caption} \ 
Fuzzy concepts in food space, for differing variance parameters, plotted over the unit square $[\text{yellow},\text{green}] \times [\text{sweet},\text{bitter}] \subseteq F$. Decreasing variance increases the crispness of the concepts. Values range from dark blue (1) through green to white (0).
\end{caption}
\end{figure}

\paragraph{A taste-colour channel} 
As a first example of conceptual reasoning beyond simply combining  concepts, we consider an example of a `metaphorical' mapping between domains. Consider the channel from tastes to colours defined by 
\[
\scalebox{1.0}{\begin{tikzpicture}
	\begin{pgfonlayer}{nodelayer}
		\node [style=whitedot] (0) at (0, -0.5) {};
		\node [style=downground] (1) at (0, -1.25) {};
		\node [style=none] (5) at (1, 0.5) {};
		\node [style=medium map] (9) at (1.5, 0.75) {banana};
		\node [style=none] (10) at (2, 0.5) {};
		\node [style=none] (11) at (2, -1.5) {};
		\node [style=label] (12) at (2, -2) {$T$};
		\node [style=label] (13) at (0, -2) {$C$};
		\node [style=none] (14) at (0, -0.5) {};
		\node [style=none] (15) at (-1, 0.5) {};
		\node [style=none] (16) at (-1, 0.75) {};
		\node [style=label] (17) at (-1, 1.25) {$C$};
		\node [style=map] (18) at (-4, -0.5) {`tastes'};
		\node [style=none] (19) at (-4, -1.5) {};
		\node [style=none] (20) at (-4, 0.75) {};
		\node [style=none] (21) at (-2, -0.5) {$:=$};
		\node [style=label] (22) at (-4, 1.25) {$C$};
		\node [style=label] (23) at (-4, -2) {$T$};
	\end{pgfonlayer}
	\begin{pgfonlayer}{edgelayer}
		\draw (1) to (0);
		\draw [bend right=45] (0) to (5.center);
		\draw (11.center) to (10.center);
		\draw [bend left=45] (14.center) to (15.center);
		\draw (16.center) to (15.center);
		\draw (20.center) to (19.center);
	\end{pgfonlayer}
\end{tikzpicture}}
\] 
where $\discardflip{C}$ is the uniform (Lebesgue) measure over $C$. This channel transforms any concept on colours into one tastes via precomposition. For example we can interpret the concept of  `tasting yellow' as \[
\scalebox{1.0}{\begin{tikzpicture}
	\begin{pgfonlayer}{nodelayer}
		\node [style=map] (18) at (-4.75, -0.75) {`tastes'};
		\node [style=none] (19) at (-4.75, -1.75) {};
		\node [style=none] (20) at (-4.75, 0.5) {};
		\node [style=label] (23) at (-4.75, -2.25) {$T$};
		\node [style=map] (28) at (-4.75, 0.5) {yellow};
		\node [style=map] (31) at (-10.25, 0) {`tastes yellow'};
		\node [style=none] (32) at (-10.25, -1.75) {};
		\node [style=none] (33) at (-10.25, 0) {};
		\node [style=label] (34) at (-10.25, -2.25) {$T$};
		\node [style=none] (35) at (-7.25, -0.5) {$=$};
	\end{pgfonlayer}
	\begin{pgfonlayer}{edgelayer}
		\draw (20.center) to (19.center);
		\draw (33.center) to (32.center);
	\end{pgfonlayer}
\end{tikzpicture}} :: t \mapsto \frac{1}{\lambda(C)}\int_{c \in C} \text{yellow}(c)\text{banana}(c,t) d\lambda(c)
\] 
where $\lambda$ denotes the Lebesgue measure on $C$.

\paragraph{Future applications} 
In future it would be interesting to explore more sophisticated examples of conceptual channels, including those with a linguistic interpretation as metaphors. It would be desirable to extend the learning process \eqref{eq:banana-learn} to give a `join' on fuzzy concepts, rather than merely crisp ones. Besides point-wise multiplication, we should also aim to describe further ways of combining concepts, which for example account for the `pet fish' phenomenon \cite{fodor1996red,coecke2015compositional}.

\bibliographystyle{eptcs}
\bibliography{cog}

\end{document}